\numberwithin{equation}{section}
\theoremstyle{definition}
\newtheorem{thm}{Theorem}[section]
\newtheorem{pro}[thm]{Proposition}
\newtheorem{lem}[thm]{Lemma}
\newtheorem{cor}[thm]{Corollary}
\newtheorem{dfn}[thm]{Definition}
\newtheorem{ex}[thm]{Example}
\newcommand{\INN}[2]{\langle #1, #2 \rangle}
\newcommand{\bs}[1]{\boldsymbol{#1}}
\title{A Simons type condition for instability of $F$-Yang-Mills connections}
\author{Kurando Baba}
\address{Department of Mathematics,
Faculty of Science and Technology,
Tokyo University of Science,
Noda, Chiba, 278-8510,
Japan}
\email{kurando.baba@rs.tus.ac.jp}
\author{Kazuto Shintani}
\address{Department of Mathematics, Graduate School of Science and Technology, Tokyo University of Science, Noda, Chiba, 278-8510, Japan}
\email{6121505@ed.tus.ac.jp}
\keywords{$F$-Yang-Mills connection,
instability,
degree,
$F$-harmonic form,
index}
\subjclass[2020]{Primary: 53C07,
Secondary: 58E15}
\date{January 11, 2023}
\begin{document}

\maketitle

\begin{abstract}
$F$-Yang-Mills connections
are critical points
of $F$-Yang Mills functional on the space
of connections of a
principal fiber bundle,
which is a generalization
of Yang-Mills connections,
$p$-Yang-Mills connections
and exponential Yang-Mills
connections and so on.
Here, $F$ is a strictly
increasing $C^{2}$-function.
In this paper,
we extend Simons theorem for an instability of Yang-Mills connections
to $F$-Yang-Mills connections.
We derive
a sufficient condition
that any non-flat, $F$-Yang-Mills connection
over convex hypersurfaces
in a Euclidean space
is instable.
In the sphere case,
this condition is expressed by
an inequality with respect to
its dimension and a degree of the differential of the function $F$.
The proofs of the results
are given by extending
Kobayashi-Ohnita-Takeuchi's
calculation
to $F$-Yang-Mills connections.
\end{abstract}

\tableofcontents

\section{Introduction}

A Yang-Mills connection
is a critical point of the Yang-Mills functional
defined on the
space of connections
of any principal
fiber bundle over a connected,
closed Riemannian manifolds.
There are many developments in the theory of
Yang-Mills connections.
On the other hand,
several Yang-Mills type
functionals
were introduced
and critical points of such functionals have been studied,
for example,
$p$-Yang-Mills functional
(Uhlenbeck \cite{Uhlenbeck},
Chen-Zhou \cite{CZ}),
exponential Yang-Mills functional (Matsuura-Urakawa \cite{MU}, Wei \cite{Wei})
and the generalized Yang-Mills-Born-Infeld energy functional
(Sibner-Sibner-Yang \cite{SSY}, Dong-Wei \cite{DW}, Gherghe \cite{Gherghe}).

An $F$-Yang-Mills functional
provides a unified description
of the above functionals
(Jia-Zhou \cite{JZ}, Dong-Wei \cite{DW}).
Here,
$F$
indicates a strictly
increasing $C^{2}$-function defined on $[0,c)$,
$0<c\leq \infty$.
A critical point of the $F$-Yang-Mill functional
is called an $F$-Yang-Mills connection.
The purpose of this paper is to
study the stability of $F$-Yang-Mills connections.
More precisely,
we would like to give a sufficient
condition that
any non-flat, $F$-Yang-Mills connection
is instable,
which is an extension of 
the following Simons theorem
for the instability
of Yang-Mills connections
to $F$-Yang-Mills connections.

\begin{thm}[{\cite{Simons}}]\label{thm:Simons}
For $n>4$,
any non-flat, Yang-Mills connection
over the standard sphere $S^{n}$
is instable.
\end{thm}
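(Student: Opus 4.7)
The plan is to prove instability by exhibiting, at every non-flat Yang-Mills connection $A$ on a principal bundle $P \to S^{n}$, an explicit test variation whose second variation of the Yang-Mills functional is strictly negative. Following Simons' original idea, I would use the embedding $S^{n} \hookrightarrow \mathbb{R}^{n+1}$ to produce natural variations from ambient constant vector fields. Concretely, for each $V \in \mathbb{R}^{n+1}$ let $V^{T}$ denote the tangential projection onto $S^{n}$; this is a conformal vector field satisfying $\nabla_{X} V^{T} = -\INN{V}{x}\, X$ along $S^{n}$. Given $A$ with curvature $R^{A}$, set $\phi_{V} = \iota_{V^{T}} R^{A}$, which is an $\mathrm{ad}(P)$-valued $1$-form, and consider the variation $A_{t} = A + t \phi_{V}$.

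The first step is to apply the second variation formula for the Yang-Mills functional, $H(A)(\phi,\phi) = \int_{S^{n}} \INN{(d_{A} d_{A}^{*} + d_{A}^{*} d_{A}) \phi + \mathcal{R}^{A}\phi}{\phi}\, dv$, where $\mathcal{R}^{A}$ is the Weitzenböck curvature term involving the Ricci tensor of $S^{n}$ and bracketing with $R^{A}$. Substituting $\phi = \phi_{V}$, I would simplify using (i) the Yang-Mills equation $d_{A}^{*} R^{A} = 0$ together with the Bianchi identity $d_{A} R^{A} = 0$, (ii) the explicit form of $\nabla V^{T}$ above and $\mathrm{div}(V^{T}) = -n\INN{V}{x}$, and (iii) integration by parts on $S^{n}$. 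The outcome should be a pointwise expression for $H(A)(\phi_{V},\phi_{V})$ that is quadratic in $R^{A}$ and linear in $V \otimes V$.

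The decisive step is to sum $H(A)(\phi_{V_{\alpha}}, \phi_{V_{\alpha}})$ over an orthonormal basis $V_{1}, \ldots, V_{n+1}$ of $\mathbb{R}^{n+1}$, exploiting the identity $\sum_{\alpha} V_{\alpha}^{T} \otimes V_{\alpha}^{T} = g_{S^{n}}$ and $\sum_{\alpha} \INN{V_{\alpha}}{x}^{2} = 1$. After this trace all $V$-dependence collapses to constants, and the global second variation becomes a fixed numerical multiple of $\|R^{A}\|_{L^{2}}^{2}$; tracking the constant, it takes the form $c(n)\, \|R^{A}\|_{L^{2}}^{2}$ with $c(n) = 4-n$ up to a positive factor, so it is strictly negative when $n > 4$. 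Since $A$ is not flat, $\|R^{A}\|_{L^{2}} > 0$, and therefore at least one $\phi_{V_{\alpha}}$ lies in a negative direction of the Hessian, proving $A$ is unstable.

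The main obstacle is the summation step: the pointwise bookkeeping of indices mixing $V^{T}$-contractions, the twisted Laplacian on $\mathrm{ad}(P)$-valued forms, and the curvature term $\mathcal{R}^{A}$ requires care, and one must use the constant curvature $\mathrm{Ric}_{S^{n}} = (n-1) g_{S^{n}}$ and the umbilic second fundamental form of $S^{n} \subset \mathbb{R}^{n+1}$ to collect the Weitzenböck correction together with the divergence-type boundary terms into the single coefficient $c(n)$. Everything else is a concrete tensor calculation; once the geometry of the embedding is used systematically, the negative sign emerges precisely from the dimension threshold $n > 4$.
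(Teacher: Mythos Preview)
Your approach is essentially the same as the paper's: the paper obtains Simons' theorem as the special case $F(t)=t$ (so $d_{F'}=0$) of Corollary~\ref{cor:FYM_Sninstable}, and the machinery behind that corollary is exactly what you outline---embed $S^{n}\hookrightarrow\mathbb{R}^{n+1}$, take the tangential parts $V_{A}$ of the standard basis of $\mathbb{R}^{n+1}$, use $\iota_{V_{A}}R^{\nabla}$ as test directions, sum the second variations over $A$, and reduce via the Bochner--Weitzenb\"ock formula and the umbilic geometry of $S^{n}$ to the single constant $\tfrac{2}{r^{2}}(4-n)\|R^{\nabla}\|_{L^{2}}^{2}$.

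One correction worth flagging: the second variation at a Yang--Mills connection is
\[
\int_{M}\bigl\{\|d^{\nabla}\alpha\|^{2}+\INN{\mathfrak{R}^{\nabla}(\alpha)}{\alpha}\bigr\}\,dv,
\]
i.e.\ only the $d^{\nabla*}d^{\nabla}$ part appears, not the full Hodge Laplacian $d^{\nabla}d^{\nabla*}+d^{\nabla*}d^{\nabla}$ (compare Proposition~\ref{pro:FYM_2ndVar} with $F''=0$, $F'=1$). Since $d^{\nabla*}(\iota_{V_{A}}R^{\nabla})$ does not vanish in general, including that extra nonnegative term would spoil the constant. Relatedly, the operator $\mathfrak{R}^{\nabla}$ on $1$-forms involves only bracketing with $R^{\nabla}$ (see \eqref{eqn:WCurv_order1}); the Ricci contribution enters later through the Bochner--Weitzenb\"ock identity when you pass to the rough Laplacian. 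With these two adjustments your sketch goes through and matches the paper's computation line for line.
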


From this theorem
the study of
the weak stability for
the usual
Yang-Mills connections
over $S^{n}$
makes sense only for $n\leq 4$.
It is known that
weakly stable
Yang-Mills connections
over the $4$-sphere
are closely related to
self-dual connections
and anti-self-dual connections.
Indeed,
Bourguignon-Lawson \cite[Theorem B]{BL}
proved
that,
in the case when
the structure group
is a specific unitary group,
any weakly stable
Yang-Mills connection
over $S^{4}$
is either self-dual or anti-self-dual.
On the other hand,
the construction
of these connections
were given by
Atiyah-Drinfeld-Hitchin-Manin
\cite{ADHM}.
We expect that
such studies
can be explored for $F$-Yang-Mills
connections.

This paper
contributes to find
a suitable
extension of Theorem \ref{thm:Simons}
for $F$-Yang-Mills connections.
In fact, we derive a Simons type condition
for the instability
of $F$-Yang-Mills connections
over convex hypersurfaces in
a Euclidean space
(see Theorem \ref{thm:FYM_inst_cch}
for the detail).
The proof of this theorem
is given by extending
Kobayashi-Ohnita-Takeuchi's
calculation \cite{KOT}
of the second variation
of the usual Yang-Mills functional.
From Theorem \ref{thm:FYM_inst_cch},
we have an extension of 
Theorem \ref{thm:Simons}
as follows.

\begin{thm}[{Corollary \ref{cor:FYM_Sninstable}}]\label{thm:FYM_Simonstype}
Let $d_{F'}$ denote the degree
of the differential $F'$ defined in Definition \ref{dfn:F'_degree_dfn}.
Assume that the degree $d_{F'}$
is finite.
Then,
for $n>4d_{F'}+4$,
any non-flat, $F$-Yang-Mills connection
over $S^{n}$
is instable.
\end{thm}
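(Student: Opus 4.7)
The plan is to deduce Theorem \ref{thm:FYM_Simonstype} as a direct specialization of Theorem \ref{thm:FYM_inst_cch} to the round sphere, regarded as the most symmetric convex hypersurface in $\mathbb{R}^{n+1}$. Since $S^{n}$ is totally umbilic with all principal curvatures equal to $1$, I expect the general convex-hypersurface criterion to collapse to a purely numerical inequality involving only $n$ and $d_{F'}$, and this is where the constant $4d_{F'}+4$ should come from.

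First, I would record the sufficient condition produced by Theorem \ref{thm:FYM_inst_cch}. Following the Kobayashi-Ohnita-Takeuchi strategy, variations of a given connection over a hypersurface $M \subset \mathbb{R}^{n+1}$ are built from the restrictions of constant vector fields on $\mathbb{R}^{n+1}$, and after averaging over an orthonormal basis one obtains a trace identity for the Hessian of the Yang-Mills functional. For the $F$-Yang-Mills Hessian the same procedure produces two families of terms: an $F'$-weighted quadratic form that behaves like the usual Yang-Mills second variation, and an $F''$-weighted correction coming from differentiating $F$ twice along the variation. The defining inequality for $d_{F'}$ (essentially a bound of the form $2tF''(t) \le d_{F'}F'(t)$) is precisely what is needed to absorb the correction into the main term; the general convex-hypersurface condition should therefore read as an inequality between the symmetric functions of the principal curvatures of $M$ and the dimension, modified by a factor $1+d_{F'}$.

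Next, I would plug the umbilic data $\kappa_{1}=\cdots=\kappa_{n}=1$ into this condition. Every trace over principal curvatures simply multiplies by $n$, and the positivity of the shape operator that was needed at the level of convex hypersurfaces becomes automatic. A routine rearrangement should then turn the condition into $n > 4(1+d_{F'})$, which is the stated threshold $n>4d_{F'}+4$. As a sanity check, the classical case $F(t)=t$ corresponds to $d_{F'}=0$ and recovers Simons' original bound $n>4$; the $p$-Yang-Mills and exponential cases should likewise fall out by evaluating $d_{F'}$ for $F(t)=(2t)^{p/2}/p$ and $F(t)=e^{t}-1$ respectively.

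The main obstacle is not conceptual but bookkeeping: one has to ensure that the inequality arising from Theorem \ref{thm:FYM_inst_cch} in the umbilic case yields exactly the clean constant $4d_{F'}+4$ and not some larger number. This requires using the degree bound in its tightest form at every appearance of $F''$ in the second variation and verifying that no further factor is lost in the averaging argument over an orthonormal frame of $\mathbb{R}^{n+1}$. Once these constants are checked, the corollary follows immediately: any non-flat $F$-Yang-Mills connection over $S^{n}$ satisfies the hypothesis of Theorem \ref{thm:FYM_inst_cch} provided $n>4d_{F'}+4$, and is therefore unstable.
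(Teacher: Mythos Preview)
Your approach is correct and matches the paper's: Corollary \ref{cor:FYM_Sninstable} is obtained by specializing Theorem \ref{thm:FYM_inst_cch} to $S^{n}\subset\mathbb{R}^{n+1}$, where all principal curvatures equal $1/r$, so the condition $\sum_{m\neq i,j}\lambda_{m}>\lambda_{i}+\lambda_{j}+4d_{F'}\lambda_{k}$ collapses to $(n-2)/r>(2+4d_{F'})/r$, i.e., $n>4d_{F'}+4$. Two small slips to fix: the degree bound is $tF''(t)\le d_{F'}F'(t)$, not $2tF''(t)\le d_{F'}F'(t)$; and the exponential case $F(t)=e^{t}$ does \emph{not} fall out of this corollary, since $d_{F_{e}'}=\infty$ (the paper treats it separately in Proposition \ref{pro:eYM-_inst} under an additional curvature bound).
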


Theorem \ref{thm:FYM_Simonstype}
clarifies the importance
of the finiteness
of the degree $d_{F'}$
in order to derive
the Simons type
condition
for the instability
of $F$-Yang-Mills connections.
For the usual Yang-Mills connections,
this result 
coincides with
Theorem \ref{thm:Simons}
because $d_{F'}=0$ holds.
Furthermore,
it can be verified
that
Theorem \ref{thm:FYM_Simonstype}
also coincides with
the instability theorem
for the $p$-Yang-Mills connections,
which was given by
Chen-Zhou \cite[Corollary 4.2]{CZ}.
We can find an alternative formula
of the instability theorem for
$F$-Yang-Mills connections by
Jia-Zhou \cite[Corollary 16]{JZ}.

On the other hand,
in the case when $F'$ has infinite degree,
it is difficult to find a sufficient condition for
the instability of $F$-Yang-Mills connections
under a general setting.
For example,
the $F$-function
corresponding to
the exponential Yang-Mills functional
$\mathscr{Y\!\!M}_{e}$
or the generalized Yang-Mills-Born-Infeld
energy functional
$\mathscr{Y\!\!M}_{\epsilon=-1}$
with minus sign
has infinite degree.
The stability of
critical points
of these functional
were studied by
Matsuura-Urakawa for $\mathscr{Y\!\!M}_{e}$
and by Gherghe for $\mathscr{Y\!\!M}_{\epsilon=-1}$.
For further developments
we study the instability
for the connections over $S^{n}$
by means of
the index formula
stated in
Theorem \ref{thm:sumI_cal}.
In fact, we derive
a certain sufficient condition for
the instability of the connections
by imposing
the boundedness of 
its curvature
(see, Propositions \ref{pro:gYMBI-_inst}
and \ref{pro:eYM-_inst}
for the detail).

The organization of
this paper is as follows:
In Section \ref{sec:Pre},
we review
the basic notions
in the Yang-Mills theory,
which are related to the present paper.
We note that
Kobayashi-Ohnita-Takeuchi \cite{KOT}
studied the instability of Yang-Mills connections
via analysis of the indices
for harmonic forms.
Here, harmonic forms
are defined
as elements in the zero eigenspace
of Hodge-Laplacian.
We recall Bochner-Weitzenb\"ock formula
for the Hodge-Laplacian,
which is needed for our calculation.
In Section \ref{sec:FYM_dfn},
we review the basics
for $F$-Yang-Mills connections.
In Subsection \ref{sec:FYM_dfn_1stV},
we recall the notion
of $F$-Yang-Mills connections
and derive the $F$-Yang-Mills equation,
i.e., the Euler-Lagrange equation
for the $F$-Yang-Mills functional.
Motivated by the $F$-Yang-Mills equation,
we introduce the notion of $F$-harmonic forms
(Definition \ref{dfn:FHarm_dfn}).
In Subsection \ref{sec:FYM_inst_2stV},
we recall the definition
of the instability
of $F$-Yang-Mills connections
and show the second variational formula
for the $F$-Yang-Mills functional.
Motivated by this formula,
we define the index
of $F$-harmonic forms
(Definition \ref{dfn:FHarm_indx}).
In Section \ref{sec:Simons},
we prove Theorem \ref{thm:FYM_Simonstype}
via analysis of the indices
for $F$-harmonic forms.
In Subsection \ref{sec:FHarm_indx_analysis1},
we extend the result
\cite[(4.37) Proposition]{KOT}
for the index of harmonic forms
to $F$-harmonic forms (Theorem \ref{thm:sumI_cal}).
Following to
Theorem \ref{thm:sumI_cal},
we need essentially to evaluate
a quantity defined in Definition \ref{dfn:h_1_dfn}.
We also find
that the key for proving Theorem 
\ref{thm:FYM_Simonstype}
is to evaluate
the relation between $F'(\|\varphi\|^{2}/2)$
and $F''(\|\varphi\|^{2}/2)$
for an $F$-harmonic $2$-form $\varphi$.
Motivated by this consideration,
in Subsection \ref{sec:FHarm_indx_analysis2},
we introduce
the notion of the degree $d_{F'}$
of the differential $F'$
(Definition \ref{dfn:F'_degree_dfn}).
In the case when $d_{F'}$ is finite,
we derive an inequality
for the index of $F$-harmonic forms
based on Theorem \ref{thm:sumI_cal}
(Theorem \ref{thm:sumIF_ineq2}).
In Subsection \ref{sec:FYM_inst_CCH},
we extend the result
\cite[(5.3) Theorem]{KOT} for
the instability of Yang-Mills connections
to $F$-Yang-Mills connections
by means of Theorem \ref{thm:sumIF_ineq2}
(Theorem \ref{thm:FYM_inst_cch}).
As a corollary
of Theorem \ref{thm:FYM_inst_cch}
we obtain Theorem \ref{thm:FYM_Simonstype}
(Corollary \ref{cor:FYM_Sninstable}).
We prove
Propositions \ref{pro:gYMBI-_inst}
and \ref{pro:eYM-_inst}.
It is known
that there are
strong similarities
between the theory of Yang-Mills connections
and that of harmonic maps.
Finally,
we discuss a counter part of our results
in the theory of harmonic maps.

\section{Preliminaries}\label{sec:Pre}

Let $(M,g)$ be an $n$-dimensional,
connected, closed Riemannian manifold and $D$ denote the Levi-Civita connection on $M$.
Let $G$ be a compact Lie group
and $\mathfrak{g}$ denote its
Lie algebra.
We write the adjoint
representation of $G$ on $\mathfrak{g}$ as
$\mathrm{Ad}:G\to \mathrm{GL}(\mathfrak{g})$.
Let $P$ be a principal
fiber bundle over $M$
with structure group $G$.
A $\mathfrak{g}$-valued
$1$-form $A$ on $P$
is called a connection
if $A$ is of type $\mathrm{Ad}$
and $A(\tilde{X})=X$ holds
for all $X\in\mathfrak{g}$,
where $\tilde{X}$
denotes the fundamental vector
field on $P$ associated with $X$.
We denote by
$\Omega^{k}_{\mathrm{Ad},\mathrm{hor}}(P,\mathfrak{g})$
the vector space of
horizontal $k$-forms
of type $\mathrm{Ad}$
on $P$ with values
in $\mathfrak{g}$.
It is verified that
the curvature $2$-form
of a connection on $P$
gives an element of $\Omega^{2}_{\mathrm{Ad},\mathrm{hor}}(P,\mathfrak{g})$.
The kernel of a connection on $P$
defines an Ehresmann connection,
that is, a right-invariant,
horizontal distribution
on $P$.
Then, it is known that
this distribution
is integrable
if and only if
the curvature $2$-form of $A$
vanishes.
A connection $A$
is said to be \emph{flat},
if its curvature $2$-form
vanishes.
For any two connections
$A, A'$,
the difference
$A-A'$ is in
$\Omega^{1}_{\mathrm{Ad},\mathrm{hor}}(P,\mathfrak{g})$.
Conversely,
$A+\alpha$
gives another connection on $P$
for all
$\alpha\in\Omega^{1}_{\mathrm{Ad},\mathrm{hor}}(P,\mathfrak{g})$.
Hence
the set $\mathscr{C}_{P}$
of connections 
on $P$
becomes an affine space
over the vector space
$\Omega^{1}_{\mathrm{Ad},\mathrm{hor}}(P,\mathfrak{g})$.

We make use of a different
description of 
connections on $P$.
Denote by
$\mathfrak{g}_{P}=P\times_{\mathrm{Ad}}\mathfrak{g}$
the adjoint bundle of $P$,
that is,
the associated vector bundle
of $P$ with the adjoint
representation $\mathrm{Ad}$ of $G$ on $\mathfrak{g}$.
It follows from
\cite[Theorem 5.13.4]{Hamilton}
that
$\Omega^{k}_{\mathrm{Ad},\mathrm{hor}}(P,\mathfrak{g})$
is canonically
isomorphic with
the vector space
of $k$-forms on $M$
with values in $\mathfrak{g}_{P}$,
which we write $\Omega^{k}(\mathfrak{g}_{P})
=\Gamma(\Lambda^{k}T^{*}M\otimes\mathfrak{g}_{P})$.
Any connection on $P$
corresponds to
a connection
on $\mathfrak{g}_{P}$,
i.e., 
a covariant derivative
$\nabla:\Gamma(\mathfrak{g}_{P})\to
\Omega^{1}(\mathfrak{g}_{P})$
on the bundle $\mathfrak{g}_{P}$.
It is shown that
the curvature $R^{\nabla}$
of $\nabla$ on $\mathfrak{g}_{P}$
is in $\Omega^{2}(\mathfrak{g}_{P})$
(cf.~\cite[Corollary 5.13.5]{Hamilton}).
In what follows,
we identify
$\mathscr{C}_{P}$
with the set of
connections on $\mathfrak{g}_{P}$,
which is an affine space
over the vector space $\Omega^{1}(\mathfrak{g}_{P})$.

We give
a fiber metric
on $\mathfrak{g}_{P}$
which is compatible
with connections on $\mathfrak{g}_{P}$.
Such a fiber metric
is induced from
an $\mathrm{Ad}(G)$-invariant
inner product $\INN{\cdot}{\cdot}$ on $\mathfrak{g}$
(cf.~\cite[Proposition 5.9.7]{Hamilton}).
In addition,
$\INN{\cdot}{\cdot}$
also induces a pointwise inner product
on the space $\Omega^{k}(\mathfrak{g}_{P})$, which is denoted by the same symbol $\INN{\cdot}{\cdot}$.
We set
$\|\varphi\|^{2}=\INN{\varphi}{\varphi}$
for $\varphi\in\Omega^{k}(\mathfrak{g}_{P})$.
Here, we write 
$\INN{\varphi}{\psi}$
($\varphi,\psi\in\Omega^{k}(\mathfrak{g}_{P})$) by means of their components.
We take an orthonormal
basis $(e_{1},\dotsc,e_{n})$
of the tangent space $T_{x}M$
at $x\in M$,
and denote by
$(\theta^{1},\dotsc,\theta^{n})$
its dual basis.
If we write
\begin{equation}
\varphi=\dfrac{1}{k!}\sum_{i_{1},\dotsc,i_{k}}
\varphi_{e_{i_{1}},\dotsc,e_{i_{k}}}\theta^{i_{1}}\wedge\dotsb\wedge\theta^{i_{k}}\,,\quad
\psi=\dfrac{1}{k!}\sum_{i_{1},\dotsc,i_{k}}\psi_{e_{i_{1}},\dotsc,e_{i_{k}}}\theta^{i_{1}}\wedge\dotsb\wedge\theta^{i_{k}}\,,
\end{equation}
then we obtain
\begin{equation}
\INN{\varphi}{\psi}=\dfrac{1}{k!}\sum_{i_{1},\dotsc,i_{k}}
\INN{\varphi_{e_{i_{1}},\dotsc,e_{i_{k}}}}{\psi_{e_{i_{1}},\dotsc,e_{i_{k}}}}\,.
\end{equation}
By integrating this pointwise inner product over $M$,
we obtain an inner product on $\Omega^{k}(\mathfrak{g}_{P})$
as follows:
\begin{equation}\label{eqn:()_dfn}
(\varphi,\psi)=\int_{M}\INN{\varphi}{\psi}dv\,,
\end{equation}
where $dv$ denotes the Riemannian volume form on $M$.

For any connection $\nabla$,
the covariant exterior derivative
$d^{\nabla}:\Omega^{k}(\mathfrak{g}_{P})\to\Omega^{k+1}(\mathfrak{g}_{P})$ is given by
\begin{equation}\label{eqn:dnab_nab}
(d^{\nabla}\varphi)_{X_{0},\dotsc,X_{k}}
=\sum_{i=0}^{k}(-1)^{i}(\nabla_{X_{i}}\varphi)_{X_{0},\dotsc,\hat{X}_{i},\dotsc,X_{k}}\,,
\end{equation}
for $\varphi\in\Omega^{k}(\mathfrak{g}_{P})$,
where $X_{0},\dotsc,X_{k}$ are tangent vectors of $M$.
We denote by $\delta^{\nabla}$
the formal adjoint operator of $d^{\nabla}$,
that is, $\delta^{\nabla}:\Omega^{k}(\mathfrak{g}_{P})\to\Omega^{k-1}(\mathfrak{g}_{P})$ is defined by
$(d^{\nabla}\psi,\varphi)
=(\psi,\delta^{\nabla}\varphi)$
for
$\varphi\in\Omega^{k}(\mathfrak{g}_{P})$
and $\psi\in\Omega^{k-1}(\mathfrak{g}_{P})$.
Following to \cite[(2.13)]{BL},
for any $\varphi\in\Omega^{k}(\mathfrak{g}_{P})$,
$\delta^{\nabla}$ has the following expression:
\begin{equation}
(\delta^{\nabla}\varphi)_{X_{1},\dotsc,X_{k-1}}
=-\sum_{j=1}^{n}(\nabla_{e_{j}}\varphi)_{e_{j},X_{1},\dotsc,X_{k-1}}\,.
\end{equation}
For any connection $\nabla$,
the curvature $2$-form
$R^{\nabla}$
satisfies
$d^{\nabla}R^{\nabla}=0$,
which is called the \emph{Bianchi identity} for $\nabla$.
In general, $d^{\nabla}\circ d^{\nabla}$
does not vanish.
It is known that,
if $\nabla$ is flat,
then $d^{\nabla}\circ d^{\nabla}=0$ holds.
This is an alternative interpretation
of flat connections.

A Yang-Mills connection $\nabla$
is defined as a critical point of
the Yang-Mills functional
\begin{equation}
\mathscr{C}_{P}\to\mathbb{R};~
\nabla\mapsto\dfrac{1}{2}\int_{M}\|R^{\nabla}\|^{2}dv\,.
\end{equation}
It is shown that
the Euler-Lagrange equation for this functional
is given by
$\delta^{\nabla}R^{\nabla}=0$,
which is called
the Yang-Mills equation.
Hodge-Laplacian
is defined by $\Delta^{\nabla}=d^{\nabla}\delta^{\nabla}+\delta^{\nabla}d^{\nabla}$,
which gives a differential operator on $\Omega^{k}(\mathfrak{g}_{P})$.
A $\mathfrak{g}_{P}$-valued form $\varphi$
is called a harmonic form
if $\varphi$ satisfies
$\Delta^{\nabla}\varphi=0$.
Then, it is verified that
$\varphi$ is harmonic
if and only if it satisfies
$d^{\nabla}\varphi=0$
and $\delta^{\nabla}\varphi=0$.
This yields
that
the curvature form $R^{\nabla}$
of a Yang-Mills connection $\nabla$
is a harmonic form.
In Section \ref{sec:FYM_dfn},
we will recall the notion
of $F$-Yang-Mills connections,
which is an extension of Yang-Mills connections.
Furthermore,
we will introduce
the notion of $F$-harmonic forms
as an extension of harmonic forms 
(see Definition \ref{dfn:FHarm_dfn}).

We show Bochner-Weitzenb\"ock formula
for $\mathfrak{g}_{P}$-valued forms,
which describes the relation
between
the Hodge-Laplacian and the rough Laplacian.
This formula plays a fundamental role
in analysis of $F$-harmonic forms.
In fact,
we make use of this formula
to prove Proposition \ref{pro:FHarm_BW} in Section \ref{sec:Simons},
which gives a method
to calculate the differential
of the curvature $R^{\nabla}$.
We first recall the notion of the rough Laplacian,
namely, it is defined by
\begin{equation}
\nabla^{*}\nabla\varphi=-\sum_{j=1}^{n}\nabla^{2}_{e_{j},e_{j}}\varphi,
\quad
\varphi\in\Omega^{k}(\mathfrak{g}_{P})\,,
\end{equation}
where $\nabla_{X,Y}^{2}=\nabla_{X}\nabla_{Y}-\nabla_{D_{X}Y}$.
It is verified that
$\nabla^{*}\nabla$ is symmetric and non-negative.
This implies that
a $\mathfrak{g}_{P}$-valued form $\varphi$
satisfies
$\nabla^{*}\nabla\varphi=0$
if and only if
$\varphi$ is parallel ($\nabla\varphi=0$).
We also recall Weitzenb\"ock curvature $\mathfrak{R}^{\nabla}:\Omega^{k}(\mathfrak{g}_{P})\to\Omega^{k}(\mathfrak{g}_{P})$ for $k=1,2$ as follows.

In the case when $k=1$,
the operator 
$\mathfrak{R}^{\nabla}:\Omega^{1}(\mathfrak{g}_{P})\to \Omega^{1}(\mathfrak{g}_{P})$
is given by
\begin{equation}\label{eqn:WCurv_order1}
\mathfrak{R}^{\nabla}(\alpha)
=\sum_{i,j}[R^{\nabla}_{ji},\alpha_{j}]\theta^{i}\,,
\end{equation}
for $\alpha=\sum_{i}\alpha_{i}\theta^{i}\in\Omega^{1}(\mathfrak{g}_{P})$.
If we set
\begin{equation}
[\cdot\wedge\cdot]:\Omega^{1}(\mathfrak{g}_{P})\times\Omega^{1}(\mathfrak{g}_{P})\to\Omega^{2}(\mathfrak{g}_{P});~
[\alpha\wedge\beta]_{X,Y}=[\alpha_{X},\beta_{Y}]-[\alpha_{Y},\beta_{X}]
\end{equation}
then
the following relation holds:
\begin{equation}\label{eqn:Rnabaa_aaR}
\INN{\mathfrak{R}^{\nabla}(\alpha)}{\alpha}=\INN{[\alpha\wedge\alpha]}{R^{\nabla}}\,,\quad
\alpha\in\Omega^{1}(\mathfrak{g}_{P})\,.
\end{equation}
We denote by $R$ the Riemannian curvature on $M$,
and by $\mathrm{Ric}:T_{x}M\to T_{x}M$
($x\in M$) the Ricci curvature operator,
that is,
\begin{equation}
\mathrm{Ric}(X)=\sum_{i=1}^{n}R_{X,e_{i}}e_{i}\,,\quad
X\in T_{x}M\,.
\end{equation}
For
$\alpha\in\Omega^{1}(\mathfrak{g}_{P})$,
we define
$\alpha\circ \mathrm{Ric}\in\Omega^{1}(\mathfrak{g}_{P})$
by
$(\alpha\circ\mathrm{Ric})_{X}
=\alpha_{\mathrm{Ric}(X)}$ for all $X\in T_{x}M$.
Then, 
the following proposition shows
the Bochner-Weitzenb\"ock formula
for $\Omega^{1}(\mathfrak{g}_{P})$.

\begin{pro}[{\cite[(3.2) Theorem]{BL}}]
For $\alpha\in\Omega^{1}(\mathfrak{g}_{E})$,
we have
\begin{equation}
\Delta^{\nabla}\alpha
=\nabla^{*}\nabla\alpha+\alpha\circ \mathrm{Ric}+\mathfrak{R}^{\nabla}(\alpha)\,.
\end{equation}
\end{pro}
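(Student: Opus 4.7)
The plan is to compute both halves $d^{\nabla}\delta^{\nabla}\alpha$ and $\delta^{\nabla}d^{\nabla}\alpha$ at a fixed but arbitrary point $x\in M$ by choosing a local orthonormal frame $\{e_{1},\dotsc,e_{n}\}$ around $x$ such that $D_{e_{j}}e_{k}(x)=0$ for all $j,k$ (a geodesic normal frame). With this choice, every term of the form $(\nabla_{X}\varphi)_{\ldots,D_{\cdot}(\cdot),\ldots}$ drops out at $x$, so the defining formulas \eqref{eqn:dnab_nab} for $d^{\nabla}$ and the formal adjoint $\delta^{\nabla}$ reduce, at $x$, to purely covariant-derivative expressions in the frame.

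First I would write $\alpha=\sum_{i}\alpha_{i}\theta^{i}$ with $\alpha_{i}=\alpha_{e_{i}}\in\Gamma(\mathfrak{g}_{P})$ and compute, at $x$,
\begin{equation}
(\delta^{\nabla}\alpha)(x)=-\sum_{j}\nabla_{e_{j}}\alpha_{j},\qquad (d^{\nabla}\alpha)_{e_{j},e_{i}}(x)=\nabla_{e_{j}}\alpha_{i}-\nabla_{e_{i}}\alpha_{j}.
\end{equation}
Applying $d^{\nabla}$ and $\delta^{\nabla}$ again and evaluating at $x$ (where the Levi-Civita terms vanish) gives
\begin{equation}
(\Delta^{\nabla}\alpha)_{e_{i}}(x)=-\sum_{j}\nabla_{e_{i}}\nabla_{e_{j}}\alpha_{j}-\sum_{j}\nabla_{e_{j}}\nabla_{e_{j}}\alpha_{i}+\sum_{j}\nabla_{e_{j}}\nabla_{e_{i}}\alpha_{j}.
\end{equation}
The middle term is, by definition and the normal-frame property, the $i$-th component of $\nabla^{*}\nabla\alpha$ at $x$, so the task reduces to analyzing the difference $\sum_{j}(\nabla_{e_{j}}\nabla_{e_{i}}-\nabla_{e_{i}}\nabla_{e_{j}})\alpha_{j}$.

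The decisive step is to recognize this difference as the action of the curvature of the tensor connection on $T^{*}M\otimes\mathfrak{g}_{P}$, applied to $\alpha$ and contracted. That curvature splits into two pieces: the piece coming from the Levi-Civita connection on $T^{*}M$, which after contracting in $j$ produces $\alpha\circ\mathrm{Ric}$ (this is where the Ricci operator enters, via $\sum_{j}R_{e_{j},e_{i}}e_{j}=-\mathrm{Ric}(e_{i})$ together with the dual action on $T^{*}M$), and the piece coming from $\nabla$ on $\mathfrak{g}_{P}$, which acts on sections of the adjoint bundle by bracketing with $R^{\nabla}$ and therefore contributes $\sum_{j}[R^{\nabla}_{ji},\alpha_{j}]=\mathfrak{R}^{\nabla}(\alpha)_{e_{i}}$ in view of \eqref{eqn:WCurv_order1}. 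Assembling these two contributions with the $\nabla^{*}\nabla\alpha$ term proves the formula at $x$, and since $x$ was arbitrary, on all of $M$.

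The main obstacle, in my view, is not the algebra but the bookkeeping: one must keep straight which $\nabla$ acts as a connection on $\mathfrak{g}_{P}$-valued $0$-forms versus on $\mathfrak{g}_{P}$-valued $1$-forms, and correctly combine the Levi-Civita contribution on $T^{*}M$ with the bundle curvature contribution on $\mathfrak{g}_{P}$ when commuting $\nabla_{e_{j}}$ and $\nabla_{e_{i}}$. Using the normal-frame device collapses most of these corrections at the base point, making it feasible to extract the Ricci and the $\mathfrak{R}^{\nabla}$ pieces cleanly; without it, auxiliary terms involving $D_{e_{j}}e_{i}$ would persist and would need to be handled through Bianchi-type identities.
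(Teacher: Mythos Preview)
Your argument is the standard normal-frame derivation of the Bochner--Weitzenb\"ock formula for bundle-valued $1$-forms, and it is correct. Note, however, that the paper does not actually supply its own proof of this proposition: it is stated with a citation to \cite[(3.2) Theorem]{BL} and then used as a known ingredient. So there is nothing in the paper to compare your approach against; what you have written is essentially the proof one finds in Bourguignon--Lawson.
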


Next, let us consider the case when $k=2$.
We recall the Weitzenb\"ock curvature for $\Omega^{2}(\mathfrak{g}_{P})$,
that is,
$\mathfrak{R}^{\nabla}:\Omega^{2}(\mathfrak{g}_{P})\to
\Omega^{2}(\mathfrak{g}_{P})$ is given by
\begin{equation}\label{eqn:WCurv_order2}
\mathfrak{R}^{\nabla}(\varphi)_{X,Y}
=\sum_{j=1}^{n}\left\{
[R^{\nabla}_{e_{j},X},\varphi_{e_{j},Y}]-[R^{\nabla}_{e_{j},Y},\varphi_{e_{j},X}]
\right\}\,,
\end{equation}
for $\varphi\in\Omega^{2}(\mathfrak{g}_{P})$,
where $X, Y$ are tangent vector fields on $M$.
We denote by $\mathfrak{X}(M)$
the space of tangent vector fields on $M$,
and by $\Omega^{2}(M)$
the space of $2$-forms on $M$.
For
$\varphi\in\Omega^{2}(\mathfrak{g}_{P})$
and
$\omega\in\Omega^{2}(M)\otimes\mathrm{End}(\mathfrak{X}(M))$,
we set
\begin{equation}\label{eqn:phiomega_dfn}
(\varphi\circ\omega)_{X,Y}=\dfrac{1}{2}\sum_{j=1}^{n}\varphi_{e_{j},\omega_{X,Y}(e_{j})}\,,
\quad
X,Y\in\mathfrak{X}(M)\,.
\end{equation}
Here, we give a concrete example of $\omega$,
which appears in
the Bochner-Weitzenb\"ock formula
for $\Omega^{2}(\mathfrak{g}_{P})$.

\begin{ex}
For any $X,Y\in\mathfrak{X}(M)$,
we set
\begin{equation}\label{eqn:XwedgeY_dfn}
(X\wedge Y)(Z)=\INN{X}{Z}Y-\INN{Y}{Z}X\,,\quad
Z\in\mathfrak{X}(M)\,.
\end{equation}
If $I$ denotes the identity transformation
on $T_{x}M$,
then
$\mathrm{Ric}\wedge I\in\Omega^{2}(M)\otimes\mathrm{End}(\mathfrak{X}(M))$
is defined by
\begin{equation}
(\mathrm{Ric}\wedge I)_{X,Y}
=\mathrm{Ric}(X)\wedge Y+X\wedge \mathrm{Ric}(Y)\,,
\quad
X,Y\in\mathfrak{X}(M)\,.
\end{equation}
\end{ex}

We are ready to show
the Bochner-Weitzenb\"ock formula
for $\Omega^{2}(\mathfrak{g}_{P})$.

\begin{pro}[{\cite[(3.10) Theorem]{BL}}]\label{pro:BWformula}
For $\varphi\in\Omega^{2}(\mathfrak{g}_{P})$,
we have
\begin{equation}\label{eqn:BWformula}
\Delta^{\nabla}\varphi
=\nabla^{*}\nabla\varphi
+\varphi\circ
(\mathrm{Ric}\wedge I+2R)
+\mathfrak{R}^{\nabla}(\varphi)\,.
\end{equation}
\end{pro}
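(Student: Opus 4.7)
The plan is to prove the identity pointwise at an arbitrary $x\in M$ by working in an orthonormal frame $(e_{1},\dotsc,e_{n})$ that is geodesic at $x$, i.e.\ $D_{e_{i}}e_{j}|_{x}=0$. With this choice the formulas (\ref{eqn:dnab_nab}) for $d^{\nabla}$ and the dual formula for $\delta^{\nabla}$ reduce at $x$ to sums of plain first-order covariant derivatives, and every second derivative that appears is a clean $\nabla^{2}_{U,V}$ with no Christoffel contamination.

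First I would expand the two halves of $\Delta^{\nabla}\varphi=d^{\nabla}\delta^{\nabla}\varphi+\delta^{\nabla}d^{\nabla}\varphi$ in this frame. Substituting $(\delta^{\nabla}\varphi)_{Y}=-\sum_{j}(\nabla_{e_{j}}\varphi)_{e_{j},Y}$ into $(d^{\nabla}\delta^{\nabla}\varphi)_{X,Y}=(\nabla_{X}\delta^{\nabla}\varphi)_{Y}-(\nabla_{Y}\delta^{\nabla}\varphi)_{X}$, and likewise $(d^{\nabla}\varphi)_{e_{j},X,Y}=(\nabla_{e_{j}}\varphi)_{X,Y}-(\nabla_{X}\varphi)_{e_{j},Y}+(\nabla_{Y}\varphi)_{e_{j},X}$ into $(\delta^{\nabla}d^{\nabla}\varphi)_{X,Y}=-\sum_{j}(\nabla_{e_{j}}d^{\nabla}\varphi)_{e_{j},X,Y}$, one obtains an explicit expression for $(\Delta^{\nabla}\varphi)_{X,Y}$ that is quadratic in covariant derivatives of $\varphi$. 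The purely diagonal piece $-\sum_{j}\nabla^{2}_{e_{j},e_{j}}\varphi_{X,Y}$ is immediately recognised as $(\nabla^{*}\nabla\varphi)_{X,Y}$, so this contribution can be isolated at once.

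The remaining terms all have mismatched orderings such as $\nabla^{2}_{X,e_{j}}\varphi_{e_{j},Y}$, $\nabla^{2}_{e_{j},X}\varphi_{e_{j},Y}$, and their $Y$-counterparts. The core of the proof is to rearrange these using the Ricci identity
\begin{equation}
(\nabla^{2}_{U,V}-\nabla^{2}_{V,U})\varphi_{Z,W}
=[R^{\nabla}_{U,V},\varphi_{Z,W}]-\varphi_{R_{U,V}Z,W}-\varphi_{Z,R_{U,V}W}\,.
\end{equation}
Each swap produces three kinds of curvature terms. The $R^{\nabla}$-commutator terms, after summation over $j$ and antisymmetrisation in $(X,Y)$, are precisely the definition (\ref{eqn:WCurv_order2}) of $\mathfrak{R}^{\nabla}(\varphi)$. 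The Riemannian-curvature terms split into two families: those in which the $e_{j}$-slot of $\varphi$ is contracted with $R_{\cdot,\cdot}e_{j}$ collapse, via $\mathrm{Ric}(X)=\sum_{j}R_{X,e_{j}}e_{j}$, to $\varphi\circ(\mathrm{Ric}\wedge I)$ in the sense of (\ref{eqn:phiomega_dfn}) and (\ref{eqn:XwedgeY_dfn}); those in which the free slot of $\varphi$ is acted upon by $R_{e_{j},\cdot}(\cdot)$ combine, after invoking the first Bianchi identity $R_{U,V}W+R_{V,W}U+R_{W,U}V=0$ together with the pair-symmetry of $R$, into the term $\varphi\circ(2R)$; the factor $2$ arises from symmetrising the contribution across the two antisymmetric slots of $\varphi$.

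The main obstacle is the index bookkeeping in this last step: there are six mismatched second-derivative terms to process, and careful tracking of signs under the antisymmetrisation in $(X,Y)$ is needed to make the $R$-terms assemble exactly into $\mathrm{Ric}\wedge I+2R$ acting via (\ref{eqn:phiomega_dfn}) rather than into some other algebraic combination of $R$ and $\mathrm{Ric}$. Once this reorganisation is completed, summing the three contributions $\nabla^{*}\nabla\varphi$, $\varphi\circ(\mathrm{Ric}\wedge I+2R)$, and $\mathfrak{R}^{\nabla}(\varphi)$ yields (\ref{eqn:BWformula}).
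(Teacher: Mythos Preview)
The paper does not actually prove this proposition: it is quoted directly from \cite[(3.10) Theorem]{BL} and stated without proof, so there is no argument in the paper to compare against. Your sketch is the standard derivation of the Bochner--Weitzenb\"ock formula for bundle-valued $2$-forms---expand $d^{\nabla}\delta^{\nabla}+\delta^{\nabla}d^{\nabla}$ in a geodesic frame, isolate $\nabla^{*}\nabla$, and use the Ricci identity to convert the remaining commutators of second covariant derivatives into curvature terms---and it is correct in outline; this is essentially the route taken in \cite{BL}. The only point worth flagging is that your write-up is a plan rather than a proof: the step where the six mismatched second-derivative terms are reorganised via the first Bianchi identity into exactly $\varphi\circ(\mathrm{Ric}\wedge I+2R)$ with the normalisation of \eqref{eqn:phiomega_dfn} is where all the content lies, and you have described what should happen rather than carried it out. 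If you want this to stand as a self-contained proof you would need to write out that index computation explicitly.
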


In order to evaluate the second term
in \eqref{eqn:BWformula},
Kobayashi-Ohnita-Takeuchi
\cite[(4.36)]{KOT} introduced
$R(\varphi,\varphi)$
and $\mathrm{Ric}(\varphi,\varphi)$ for $\varphi
=(1/2)\sum_{i,j}\varphi_{ij}\theta^{i}\wedge\theta^{j}
\in\Omega^{2}(\mathfrak{g}_{P})$,
which are defined as follows:
\begin{align}
R(\varphi,\varphi)
&=\sum_{i,j,k,l}R_{ijkl}\INN{\varphi_{ij}}{\varphi_{kl}}\,,\label{eqn:Rvpvp_dfn}\\
\mathrm{Ric}(\varphi,\varphi)
&=\sum_{i,j,k,l}R_{ik}\delta_{jl}\INN{\varphi_{ij}}{\varphi_{kl}}\,,\label{eqn:Ricvpvp_dfn}
\end{align}
where $R_{ijkl}$ and
$R_{ik}$ are the components of
the Riemannian curvature
$R$ and
the Ricci curvature
$\mathrm{Ric}$ on $M$,
respectively,
that is, 
$R(e_{k},e_{l})e_{j}=\sum_{i}R^{i}_{jkl}e_{i}
=\sum_{i}R_{ijkl}e_{i}$ and
$R_{ik}=\sum_{l}R_{lkli}$.
By the definition,
$R(\varphi,\varphi)$ and
$\mathrm{Ric}(\varphi,\varphi)$
are independent of the choice of
$(e_{1},\dotsc,e_{n})$.
Here, we remark that,
in the original definitions
of $R(\varphi,\varphi)$
and $\mathrm{Ric}(\varphi,\varphi)$,
the inner product $(\cdot,\cdot)$
as in \eqref{eqn:()_dfn}
was used instead of $\INN{\cdot}{\cdot}$.

Then we have the following lemma.

\begin{lem}\label{lem:RicwedgeI+2R_cal}
For any $\varphi\in\Omega^{2}(\mathfrak{g}_{P})$,
we have
\begin{equation}\label{eqn:RicwedgeI+2R_cal}
\INN{\varphi\circ(\mathrm{Ric}\wedge I+2R)}{\varphi}
=\mathrm{Ric}(\varphi,\varphi)-\dfrac{1}{2}R(\varphi,\varphi)\,.
\end{equation}
\end{lem}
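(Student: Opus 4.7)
\textbf{Proof plan for Lemma \ref{lem:RicwedgeI+2R_cal}.} The plan is to unwind both sides in components with respect to a local orthonormal frame $(e_1,\dotsc,e_n)$ and match them directly. Writing $\omega_{e_i,e_j}(e_k)=\sum_l \omega^{l}_{ijk}\,e_l$ for any $\omega\in\Omega^{2}(M)\otimes\mathrm{End}(\mathfrak{X}(M))$, the pointwise inner product on $\Omega^{2}(\mathfrak{g}_{P})$ combined with \eqref{eqn:phiomega_dfn} yields the master identity
\begin{equation}
\INN{\varphi\circ\omega}{\varphi}=\frac{1}{4}\sum_{i,j,k,l}\omega^{l}_{ijk}\INN{\varphi_{kl}}{\varphi_{ij}}.
\end{equation}
Once this is in place, the proof splits into the two summands $\omega=\mathrm{Ric}\wedge I$ and $\omega=2R$.

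For $\omega=\mathrm{Ric}\wedge I$, the plan is to substitute \eqref{eqn:XwedgeY_dfn} together with $\mathrm{Ric}(e_a)=\sum_b R_{ab}e_b$; this produces four contractions of the Ricci tensor $R_{ab}$ against Kronecker deltas and the pair $\INN{\varphi_{kl}}{\varphi_{ij}}$. Using the antisymmetry $\varphi_{ij}=-\varphi_{ji}$, the symmetry $R_{ij}=R_{ji}$ of the Ricci tensor, and an appropriate relabeling of summation indices, each of the four contractions reduces to $\tfrac{1}{4}\mathrm{Ric}(\varphi,\varphi)$ in the sense of \eqref{eqn:Ricvpvp_dfn}. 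Summing them produces $\INN{\varphi\circ(\mathrm{Ric}\wedge I)}{\varphi}=\mathrm{Ric}(\varphi,\varphi)$.

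For $\omega=2R$, the convention $R(e_k,e_l)e_j=\sum_i R_{ijkl}e_i$ rearranges to $(2R)_{e_i,e_j}(e_k)=2\sum_l R_{lkij}\,e_l$. Substituting into the master identity and then invoking the pair symmetry $R_{lkij}=R_{ijlk}$ and the antisymmetry $R_{ijlk}=-R_{ijkl}$ of the Riemann tensor, together with the symmetry of $\INN{\cdot}{\cdot}$, collapses the sum to $-\tfrac{1}{2}R(\varphi,\varphi)$ in the sense of \eqref{eqn:Rvpvp_dfn}. Adding the two contributions produces the claimed identity.

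The main obstacle is purely combinatorial: one must keep track simultaneously of the two antisymmetries carried by $\varphi$, the pair and block symmetries of the Riemann tensor, and the fixed curvature convention used in the paper, so that the four separate contractions coming from $\mathrm{Ric}\wedge I$ genuinely collapse onto the single quantity $\mathrm{Ric}(\varphi,\varphi)$ and the contribution from $2R$ lands with the correct sign $-\tfrac{1}{2}$. No analytic input is needed beyond these algebraic symmetries.
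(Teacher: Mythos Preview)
Your proposal is correct and follows essentially the same approach as the paper: both split $\omega=\mathrm{Ric}\wedge I+2R$ into its two summands and compute each contribution directly in an orthonormal frame, using the antisymmetry $\varphi_{ij}=-\varphi_{ji}$, the symmetry of $\mathrm{Ric}$, and the pair/block symmetries of $R_{ijkl}$. Your ``master identity'' and the explicit four-term decomposition for $\mathrm{Ric}\wedge I$ are just a slightly more verbose packaging of the same computation that the paper carries out in two lines.
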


\begin{proof}
A direct calculation
shows
\begin{align}
\INN{\varphi\circ\mathrm{Ric}\wedge I}{\varphi}
&=\dfrac{1}{2}\sum_{i,j}\INN{(\varphi\circ\mathrm{Ric}\wedge I)_{e_{i},e_{j}}}{\varphi_{e_{i},e_{j}}}\\
&=\dfrac{1}{2}\sum_{i,j,k}
\INN{\varphi_{e_{k},(\mathrm{Ric}\wedge I)_{e_{i},e_{j}}(e_{k})}}{\varphi_{e_{i},e_{j}}}\,.
\end{align}
Then, by using $\varphi_{e_{i},e_{j}}=-\varphi_{e_{j},e_{i}}$, we get
\begin{equation}
\INN{\varphi\circ\mathrm{Ric}\wedge I}{\varphi}
=\sum_{i,j,k}R_{ik}\INN{\varphi_{e_{k},e_{j}}}{\varphi_{e_{i},e_{j}}}
=\mathrm{Ric}(\varphi,\varphi)\,.
\end{equation}
In a similar manner,
we can derive
\begin{equation}
\INN{\varphi\circ 2R}{\varphi}
=-\dfrac{1}{2}\sum_{i,j,k,l}R_{ijkl}\INN{\varphi_{ij}}
{\varphi_{kl}}
=-\dfrac{1}{2}R(\varphi,\varphi)\,.
\end{equation}
Thus, we have the assertion.
\end{proof}

\section{\texorpdfstring{$F$}{F}-Yang-Mills functionals and \texorpdfstring{$F$}{F}-Yang-Mills connections}\label{sec:FYM_dfn}

\subsection{Definition and the first variational formula}\label{sec:FYM_dfn_1stV}

Let $M$ be a connected, closed Riemannian
manifold and $G$ be a compact connected Lie group.
Let $P=P(M,G)$ be a principal fiber bundle
over $M$ with structure group $G$.
We denote by $\mathfrak{g}_{P}$
the adjoint bundle of $P$.
Let $0<c\leq \infty$
and
$F:[0,c)\to\mathbb{R}$ be a
strictly increasing $C^{2}$-function.
We set $\mathbb{R}_{\geq 0}=\{a\in\mathbb{R}\mid a\geq 0\}$.

\begin{dfn}\label{dfn:FYM_dfn}
The \emph{$F$-Yang-Mills functional}
$\mathscr{Y\!\!M}_{F}:\mathscr{C}_{P}\to\mathbb{R}_{\geq 0}$
is defined by
\begin{equation}
\mathscr{Y\!\!M}_{F}(\nabla)
=\int_{M}F(\dfrac{1}{2}\|R^{\nabla}\|^{2})dv\,.
\end{equation}
A connection $\nabla$ on $\mathfrak{g}_{P}$
is called a \emph{$F$-Yang-Mills connection}
if $\nabla$ is a critical point of $\mathscr{Y\!\!M}_{F}$.
Then, its curvature $2$-form $R^{\nabla}$
is also called the $F$-Yang-Mills field
of $\nabla$.
\end{dfn}

For example,
if we take $F(t)=t$,
then the $F$-Yang-Mills
functional coincides with the usual Yang-Mills functional.
Other examples
are found in Uhlenbeck
(\cite{Uhlenbeck}),
Sibner-Sibner-Yang (\cite{SSY})
and Matsuura-Urakawa (\cite{MU}).

\begin{ex}\label{ex:FYM_ex}
(1) Let $p\geq 2$.
If we put $F_{p}(t)=(1/p)(2t)^{p/2}$,
then the $F_{p}$-Yang-Mills functional
coincides with the $p$-Yang-Mills functional.
A critical point
of the $p$-Yang-Mills functional
is called a $p$-Yang-Mills connection
(cf.~\cite{Uhlenbeck}).

(2) Let $\epsilon=\pm 1$.
If we put $F_{\epsilon}(t)=\epsilon\sqrt{1+2\epsilon t}-\epsilon$,
then the $F_{\epsilon}$-Yang-Mills functional
is called the generalized Yang-Mills-Born-Infeld energy functional with sign $\epsilon$.
We call its critical
point
a critical connection of the functional
(cf.~\cite{SSY}).

(3) If we put $F_{e}(t)=e^{t}$,
then the $F_{e}$-Yang-Mills functional
coincides with the exponential Yang-Mills functional.
A critical point
of the exponential Yang-Mills functional
is called an exponential Yang-Mills connection
(cf.~\cite{MU}).
\end{ex}

$F$-Yang-Mills connections
are obtained by solving
the Euler-Lagrange equation
for $\mathscr{Y\!\!M}_{F}$.
Here, we recall the first variational formula
for the functional.

\begin{pro}[{\cite[Lemma 3.1]{DW}, \cite[(11)]{JZ}}]\label{pro:FYM_1stVar}
Let $\nabla^{t}$
($|t|<\varepsilon$)
be a $C^{\infty}$-curve
in $\mathscr{C}_{P}$ with $\nabla^{0}=\nabla$.
If we put
\begin{equation}\label{eqn:alpha_nablat}
\alpha=\dfrac{d}{dt}\!\biggm|_{t=0}\nabla^{t}\in\Omega^{1}(\mathfrak{g}_{P})\,,
\end{equation}
then we have
\begin{equation}
\dfrac{d}{dt}\!\biggm|_{t=0}\mathscr{Y\!\!M}_{F}(\nabla^{t})=\int_{M}\INN{\delta^{\nabla}(F'(\dfrac{1}{2}\|R^{\nabla}\|^{2})R^{\nabla})}{\alpha}dv\,.
\end{equation}
\end{pro}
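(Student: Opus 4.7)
The plan is to differentiate under the integral sign, apply the chain rule to the composition with $F$, evaluate the first variation of the curvature, and then integrate by parts using the formal adjoint $\delta^{\nabla}$.

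First, since $\mathscr{C}_{P}$ is an affine space over $\Omega^{1}(\mathfrak{g}_{P})$, the $C^{\infty}$-curve has an expansion $\nabla^{t}=\nabla+t\alpha+O(t^{2})$ with $\alpha$ as in \eqref{eqn:alpha_nablat}. A standard computation of the curvature of $\nabla+t\alpha$ (as in the expression $R^{\nabla+t\alpha}=R^{\nabla}+t\,d^{\nabla}\alpha+\tfrac{1}{2}t^{2}[\alpha\wedge\alpha]$, which one verifies by applying the definition of the covariant exterior derivative to the connection $\nabla+t\alpha$) yields
\begin{equation}
\frac{d}{dt}\!\biggm|_{t=0}R^{\nabla^{t}}=d^{\nabla}\alpha\in\Omega^{2}(\mathfrak{g}_{P}).
\end{equation}
This is the key identity and the only slightly nontrivial ingredient; it is essentially the linearisation of the curvature map at $\nabla$.

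Next I would differentiate under the integral, using smoothness of $F$ together with compactness of $M$, and apply the chain rule. Since $\|R^{\nabla^{t}}\|^{2}=\INN{R^{\nabla^{t}}}{R^{\nabla^{t}}}$, pointwise differentiation and the compatibility of $\INN{\cdot}{\cdot}$ give
\begin{equation}
\frac{d}{dt}\!\biggm|_{t=0}\tfrac{1}{2}\|R^{\nabla^{t}}\|^{2}=\INN{R^{\nabla}}{d^{\nabla}\alpha},
\end{equation}
and hence, combining with the outer derivative $F'$,
\begin{equation}
\frac{d}{dt}\!\biggm|_{t=0}\mathscr{Y\!\!M}_{F}(\nabla^{t})=\int_{M}F'(\tfrac{1}{2}\|R^{\nabla}\|^{2})\INN{R^{\nabla}}{d^{\nabla}\alpha}\,dv.
\end{equation}

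Finally, since $F'(\tfrac{1}{2}\|R^{\nabla}\|^{2})$ is a scalar function, the integrand equals $\INN{F'(\tfrac{1}{2}\|R^{\nabla}\|^{2})R^{\nabla}}{d^{\nabla}\alpha}$. Applying the defining adjoint relation $(d^{\nabla}\alpha,\psi)=(\alpha,\delta^{\nabla}\psi)$ with $\psi=F'(\tfrac{1}{2}\|R^{\nabla}\|^{2})R^{\nabla}\in\Omega^{2}(\mathfrak{g}_{P})$ produces exactly
\begin{equation}
\int_{M}\INN{\delta^{\nabla}\bigl(F'(\tfrac{1}{2}\|R^{\nabla}\|^{2})R^{\nabla}\bigr)}{\alpha}\,dv,
\end{equation}
which is the claimed formula. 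The main obstacle is establishing the first variation $d^{\nabla}\alpha$ of the curvature rigorously; once that is in hand, the remainder is routine differentiation under the integral and integration by parts, both justified by smoothness of $F$ and closedness of $M$.
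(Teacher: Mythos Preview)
Your proof is correct and follows essentially the same route as the paper: compute the first variation of the curvature as $d^{\nabla}\alpha$, apply the chain rule to $F(\tfrac{1}{2}\|R^{\nabla^{t}}\|^{2})$, and then move $d^{\nabla}$ to the other side via the adjoint $\delta^{\nabla}$. The only cosmetic difference is that the paper keeps the curve in the general form $\nabla^{t}=\nabla+A^{t}$ with $A^{0}=0$ rather than Taylor-expanding as $\nabla+t\alpha+O(t^{2})$, but the computation of the derivative at $t=0$ is identical.
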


\begin{proof}
Let $\nabla\in\mathscr{C}_{P}$
and
$\nabla^{t}=\nabla+A^{t}$
be a $C^{\infty}$-curve
in $\mathscr{C}_{P}$
through $\nabla$,
where
$A^{t}\in\Omega^{1}(\mathfrak{g}_{P})$ with $A^{0}=0$.
Then the curvature of $\nabla^{t}$ is given by
\begin{equation*}
 R^{\nabla^{t}}=R^{\nabla}+d^{\nabla}A^{t}+\frac{1}{2}\lbrack A^{t}\wedge A^{t}\rbrack
\,.  
\end{equation*}
By a straightforward calculation, we have
\begin{align*}
\frac{d}{dt}\mathscr{Y\!\!M}_{F}(\nabla^{t}) &=\int_{M}\frac{d}{dt}F(\frac{1}{2}\|R^{\nabla^{t}}\|^{2})dv\\
&=\int_{M}F'(\frac{1}{2}\|R^{\nabla^{t}}\|^{2})\langle\frac{d}{dt}R^{\nabla^{t}},R^{\nabla^{t}}\rangle dv\\
&=\int_{M}F'(\frac{1}{2}\|R^{\nabla^{t}}\|^{2})\langle d^{\nabla}\frac{d}{dt}A^{t}+\lbrack\frac{d}{dt}A^{t}\wedge A^{t}\rbrack,R^{\nabla^{t}}\rangle dv\,.
\end{align*}
Let $\left.\alpha=\dfrac{d}{dt}\right|_{t=0}\nabla^{t}$. The above equality becomes as follows
\begin{equation*}
\frac{d}{dt}\biggm|_{t=0}\mathscr{Y\!\!M}_{F}(\nabla^{t})=\int_{M}F'(\frac{1}{2}\|R^{\nabla}\|^{2})\langle R^{\nabla},d^{\nabla}\alpha\rangle dv=\int_{M}\INN{\delta^{\nabla}(F'(\dfrac{1}{2}\|R^{\nabla}\|^{2})R^{\nabla})}{\alpha}dv\,.
\end{equation*}
Thus, we have complete the proof.
\end{proof}

From Proposition \ref{pro:FYM_1stVar}
we immediately get the Euler-Lagrange equation
for $\mathscr{Y\!\!M}_{F}$
as follows:

\begin{cor}
$\nabla$ is an $F$-Yang-Mills connection
if and only if
$\nabla$ satisfies
\begin{equation}\label{eqn:FYM_ELeq}
\delta^{\nabla}(F'(\dfrac{1}{2}\|R^{\nabla}\|^{2})R^{\nabla})=0\,.
\end{equation}
\end{cor}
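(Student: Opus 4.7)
The plan is to derive this corollary as an immediate consequence of the first variational formula in Proposition \ref{pro:FYM_1stVar}, via the standard fundamental lemma of the calculus of variations.

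First I would observe that because $\mathscr{C}_P$ is an affine space over the vector space $\Omega^1(\mathfrak{g}_P)$, every $\alpha \in \Omega^1(\mathfrak{g}_P)$ arises as the initial velocity of some $C^\infty$-curve $\nabla^t \in \mathscr{C}_P$ with $\nabla^0 = \nabla$; one can simply take the affine curve $\nabla^t = \nabla + t\alpha$, for which \eqref{eqn:alpha_nablat} gives exactly this $\alpha$. By definition, $\nabla$ is an $F$-Yang-Mills connection precisely when the first variation $\frac{d}{dt}|_{t=0} \mathscr{Y\!\!M}_F(\nabla^t)$ vanishes for every such curve, equivalently for every $\alpha \in \Omega^1(\mathfrak{g}_P)$.

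Next I would plug this into Proposition \ref{pro:FYM_1stVar} to rewrite the critical-point condition as
\begin{equation}
\int_M \INN{\delta^{\nabla}(F'(\tfrac{1}{2}\|R^{\nabla}\|^2)R^{\nabla})}{\alpha}\, dv = 0 \quad \text{for all } \alpha \in \Omega^1(\mathfrak{g}_P).
\end{equation}
The ``if'' direction is then trivial: if \eqref{eqn:FYM_ELeq} holds, the integrand vanishes identically, so the first variation vanishes for every $\alpha$.

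For the converse I would invoke non-degeneracy of the $L^2$-pairing \eqref{eqn:()_dfn} on $\Omega^1(\mathfrak{g}_P)$. Since $\delta^{\nabla}(F'(\tfrac{1}{2}\|R^{\nabla}\|^2)R^{\nabla})$ is a smooth element of $\Omega^1(\mathfrak{g}_P)$, testing against $\alpha := \delta^{\nabla}(F'(\tfrac{1}{2}\|R^{\nabla}\|^2)R^{\nabla})$ itself yields $\int_M \|\delta^{\nabla}(F'(\tfrac{1}{2}\|R^{\nabla}\|^2)R^{\nabla})\|^2\, dv = 0$, and hence \eqref{eqn:FYM_ELeq} holds pointwise on $M$. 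There is essentially no obstacle here; the only point to be careful about is verifying that the admissible variations exhaust $\Omega^1(\mathfrak{g}_P)$, which is handled by the affine structure of $\mathscr{C}_P$ noted above.
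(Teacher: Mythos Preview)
Your proof is correct and follows exactly the approach the paper intends: the corollary is stated as an immediate consequence of Proposition~\ref{pro:FYM_1stVar}, and you have simply spelled out the standard fundamental-lemma argument that the paper leaves implicit. There is nothing to add.
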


We call \eqref{eqn:FYM_ELeq}
the \emph{$F$-Yang-Mills equation}.
Motivated by the $F$-Yang-Mills equation,
we introduce the notion of $F$-harmonic forms as follows.

\begin{dfn}\label{dfn:FHarm_dfn}
A $\mathfrak{g}_{P}$-valued
form $\varphi$
is said to be \emph{$F$-harmonic},
if $\varphi$ satisfies the following two equations:
\begin{equation}\label{eqn:FHarm_dfn}
d^{\nabla}\varphi=0\,,\quad
\delta^{\nabla}(F'(\dfrac{1}{2}\|\varphi\|^{2})\varphi)=0\,.
\end{equation}
\end{dfn}

We note that
the curvature $2$-form $R^{\nabla}$
of an $F$-Yang-Mills connection $\nabla$
is $F$-harmonic.
Indeed, 
$R^{\nabla}$
satisfies
\eqref{eqn:FHarm_dfn}
because of
the Bianchi identity 
and the $F$-Yang Mills equation
for $\nabla$.

\subsection{Instability and the second variational formula}\label{sec:FYM_inst_2stV}

Let us consider
the instability for an $F$-Yang-Mills connection.
We recall here the definition of this property.

\begin{dfn}
An $F$-Yang-Mills connection $\nabla$
is said to be weakly stable
if
the following inequality holds
for any $\alpha\in\Omega^{1}(\mathfrak{g}_{P})$:
\begin{equation}
\dfrac{d^{2}}{dt^{2}}\!\biggm|_{t=0}
\mathscr{Y\!\!M}_{F}(\nabla^{t})
\geq 0\quad
\text{where\quad}
\alpha=\dfrac{d}{dt}\!\biggm|_{t=0}\nabla^{t}\,.
\end{equation}
An $F$-Yang-Mills connection $\nabla$
is said to be \emph{instable}
if $\nabla$ is not weakly stable.
\end{dfn}

The following proposition
gives the second variational formula
for the $F$-Yang-Mills functional.

\begin{pro}\label{pro:FYM_2ndVar}
Let $\nabla$ be an $F$-Yang-Mills connection
and $\nabla^{t}$
($|t|<\varepsilon$) be a $C^{\infty}$-curve
in $\mathscr{C}_{P}$ with $\nabla^{0}=\nabla$.
Then the second variation of the $F$-Yang-Mills
functional is given by
the following:
\begin{multline}\label{eqn:FYM_2ndVar}
\dfrac{d^{2}}{dt^{2}}\!\biggm|_{t=0}\mathscr{Y\!\!M}_{F}(\nabla^{t})
=\int_{M}
F''(\dfrac{1}{2}\|R^{\nabla}\|^{2})\INN{d^{\nabla}\alpha}{R^{\nabla}}^{2}dv\\
+\int_{M}F'(\dfrac{1}{2}\|R^{\nabla}\|^{2})\left\{\INN{\mathfrak{R}^{\nabla}(\alpha)}{\alpha}
+\|d^{\nabla}\alpha\|^{2}
\right\}dv\,,
\end{multline}
where 
$\alpha=\dfrac{d}{dt}\!\biggm|_{t=0}\nabla^{t}$.
\end{pro}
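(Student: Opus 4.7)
The plan is to differentiate the proof of Proposition \ref{pro:FYM_1stVar} one step further, using the chain rule carefully on $F(\tfrac12\|R^{\nabla^t}\|^2)$, and then to cancel the unwanted second-order variational term by invoking the $F$-Yang-Mills equation. Throughout, I would work with the affine curve $\nabla^t = \nabla + A^t$ with $A^0 = 0$ and $\dot A^0 = \alpha$, so that $R^{\nabla^t} = R^\nabla + d^\nabla A^t + \tfrac12[A^t\wedge A^t]$.

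First I would compute the $t$-derivatives of the curvature. Using the product rule and the fact that $[A^t\wedge A^t]$ differentiates to $2[\dot A^t\wedge A^t]$, one obtains
\begin{equation}
\frac{d}{dt}R^{\nabla^t} = d^\nabla \dot A^t + [\dot A^t\wedge A^t],\qquad
\frac{d^2}{dt^2}R^{\nabla^t} = d^\nabla \ddot A^t + [\ddot A^t\wedge A^t] + [\dot A^t\wedge \dot A^t].
\end{equation}
Evaluating at $t=0$ and using $A^0=0$, these reduce to $d^\nabla\alpha$ and $d^\nabla \ddot A^0 + [\alpha\wedge\alpha]$ respectively.

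Next I would differentiate the functional itself. Writing $f(t) = \tfrac12\|R^{\nabla^t}\|^2$, the chain rule gives
\begin{equation}
\frac{d^2}{dt^2}\mathscr{Y\!\!M}_F(\nabla^t) = \int_M F''(f(t))\,\INN{\tfrac{d}{dt}R^{\nabla^t}}{R^{\nabla^t}}^2 dv + \int_M F'(f(t))\left(\INN{\tfrac{d^2}{dt^2}R^{\nabla^t}}{R^{\nabla^t}} + \|\tfrac{d}{dt}R^{\nabla^t}\|^2\right)dv.
\end{equation}
Setting $t=0$ and substituting the curvature derivatives above yields three terms to massage: the $F''$-term already has the desired form $F''(\tfrac12\|R^\nabla\|^2)\INN{d^\nabla\alpha}{R^\nabla}^2$; the kinetic piece $\|d^\nabla\alpha\|^2$ is already in place; and the remaining $F'$-piece contains $\INN{d^\nabla\ddot A^0}{R^\nabla} + \INN{[\alpha\wedge\alpha]}{R^\nabla}$.

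The two simplifications that finish the proof are as follows. For the $\ddot A^0$-term, integration by parts gives
\begin{equation}
\int_M F'(\tfrac12\|R^\nabla\|^2)\INN{d^\nabla\ddot A^0}{R^\nabla}\,dv = \int_M \INN{\ddot A^0}{\delta^\nabla(F'(\tfrac12\|R^\nabla\|^2)R^\nabla)}\,dv,
\end{equation}
which vanishes because $\nabla$ satisfies the $F$-Yang-Mills equation \eqref{eqn:FYM_ELeq}; this is the single place the hypothesis that $\nabla$ is a critical point enters. For the bracket term, the identity \eqref{eqn:Rnabaa_aaR} gives $\INN{[\alpha\wedge\alpha]}{R^\nabla} = \INN{\mathfrak{R}^\nabla(\alpha)}{\alpha}$ pointwise. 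Combining these transforms the expression into exactly \eqref{eqn:FYM_2ndVar}.

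The main obstacle is purely bookkeeping: getting the signs and symmetrizations in $[\dot A^t\wedge A^t]$ right and making sure the $\ddot A^0$ contribution factors through $\delta^\nabla(F'(\tfrac12\|R^\nabla\|^2)R^\nabla)$ so that the Euler-Lagrange equation applies. No new analytic ingredient beyond the formulas already recorded in Section \ref{sec:Pre} and Proposition \ref{pro:FYM_1stVar} is needed.
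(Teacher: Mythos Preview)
Your proposal is correct and follows essentially the same approach as the paper's own proof: compute the first and second $t$-derivatives of $R^{\nabla^t}$, apply the chain rule to $F(\tfrac12\|R^{\nabla^t}\|^2)$, kill the $\ddot A^0$-term via the $F$-Yang-Mills equation \eqref{eqn:FYM_ELeq}, and convert $\INN{[\alpha\wedge\alpha]}{R^\nabla}$ to $\INN{\mathfrak{R}^\nabla(\alpha)}{\alpha}$ using \eqref{eqn:Rnabaa_aaR}. The paper names $\ddot A^0=\beta$ but otherwise the arguments are identical step for step.
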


\begin{proof}
A direct calculation yields
\begin{equation*}
\frac{d}{dt}R^{\nabla^{t}}=d^{\nabla}\frac{dA^{t}}{dt}+\frac{1}{2}\frac{d}{dt}\lbrack A^{t}\wedge A^{t}\rbrack\,,
\end{equation*}
and
\begin{equation*}
\frac{d^{2}}{dt^{2}}R^{\nabla^{t}}=d^{\nabla}(\frac{d^{2}}{dt^{2}}A^{t})+\lbrack\frac{d^{2}}{dt^{2}}A^{t}\wedge A^{t}\rbrack+\lbrack\frac{dA^{t}}{dt}\wedge\frac{dA^{t}}{dt}\rbrack\,.
\end{equation*}
Hence we have 
\begin{equation*}
\left.\frac{d}{dt}\right|_{t=0}R^{\nabla^{t}}=d^{\nabla}\alpha,\quad \left.\frac{d^{2}}{dt^{2}}\right|_{t=0}=d^{\nabla}\beta+\lbrack\alpha\wedge\alpha\rbrack. 
\end{equation*}
where
$\alpha=\dfrac{d}{dt}\!\biggm|_{t=0}A^{t}$ and
$\beta=\left.\dfrac{d^{2}}{dt^{2}}\right|_{t=0}A^{t}$. We have

\begin{multline}\label{eqn:YMF_cal}
\left.\frac{d^{2}}{dt^{2}}\right|_{t=0}\mathscr{Y\!\!M}_{F}(\nabla^{t})\\
=\int_{M}F''(\frac{1}{2}\|R^{\nabla}\|^{2})\langle d^{\nabla}\alpha,R^{\nabla}\rangle^{2}dv
+\int_{M}F'(\frac{1}{2}\|R^{\nabla}\|^{2})
\left\{\langle 
\lbrack\alpha\wedge\alpha\rbrack,R^{\nabla}\rangle
+\|d^{\nabla}\alpha\|^{2}\right\}dv\\
+\int_{M}F'(\frac{1}{2}\|R^{\nabla}\|^{2})
\langle 
d^{\nabla}\beta,R^{\nabla}\rangle
dv.
\end{multline}
Then 
it can be verified
that
the third term
of \eqref{eqn:YMF_cal}
vanishes.
Indeed,
since $\nabla$
is an $F$-Yang-Mills connection,
we find
\begin{equation*}
    \int_{M}F'(\frac{1}{2}\|R^{\nabla}\|^{2})\langle d^{\nabla}\beta,R^{\nabla}\rangle dv=\int_{M}\langle\beta,\delta^{\nabla}(F'(\frac{1}{2}\|R^{\nabla}\|^{2})R^{\nabla})\rangle dv=0.
\end{equation*}
Therefore, we obtain
\begin{align*}
&\left.\frac{d^{2}}{dt^{2}}\right|_{t=0}\mathscr{Y\!\!M}_{F}(\nabla^{t})\\
&=\int_{M}F''(\frac{1}{2}\|R^{\nabla}\|^{2})\langle d^{\nabla}\alpha,R^{\nabla}\rangle^{2}dv
+\int_{M}F'(\frac{1}{2}\|R^{\nabla}\|^{2})\left\{\langle\lbrack\alpha\wedge\alpha\rbrack,R^{\nabla}\rangle +\|d^{\nabla}\alpha\|^{2}\right\}dv\\
&=\int_{M}F''(\frac{1}{2}\|R^{\nabla}\|^{2})\langle d^{\nabla}\alpha,R^{\nabla}\rangle^{2}dv
+\int_{M}F'(\frac{1}{2}\|R^{\nabla}\|^{2})\left\{\langle\mathfrak{R}^{\nabla}(\alpha),\alpha\rangle
+\|d^{\nabla}\alpha\|^{2}\right\}dv.
\end{align*}
Here,
in the last equality
we have used \eqref{eqn:Rnabaa_aaR}.
Thus, we have complete the proof.
\end{proof}

An alternative expression of the second variational
formula is found in \cite[(20)]{JZ}.
The difference between them
is the integrand of the second term of \eqref{eqn:FYM_2ndVar}.
In the case when
$\mathscr{Y\!\!M}_{F}$
is the usual Yang-Mills functional
($F(t)=t$),
$F''(t)=0$ holds,
so that the first term
of \eqref{eqn:FYM_2ndVar}
vanishes.

Motivated by Proposition \ref{pro:FYM_2ndVar},
we define the index for
any $F$-harmonic $2$-form as follows:

\begin{dfn}\label{dfn:FHarm_indx}
The \emph{index}
of an $F$-harmonic form $\varphi\in\Omega^{2}(\mathfrak{g}_{P})$
is defined by
\begin{equation}
I_{\varphi}(\alpha)
=\int_{M}F''(\dfrac{1}{2}\|\varphi\|^{2})\INN{d^{\nabla}\alpha}{\varphi}^{2}dv
+\int_{M}F'(\dfrac{1}{2}\|\varphi\|^{2})\left\{
\INN{\mathfrak{R}^{\nabla}(\alpha)}{\alpha}+
\|d^{\nabla}\alpha\|^{2}
\right\}dv\,,
\end{equation}
for any $\alpha\in\Omega^{1}(\mathfrak{g}_{P})$.
\end{dfn}

It follows from Proposition \ref{pro:FYM_2ndVar} that,
for any $F$-Yang-Mills connection $\nabla$,
if $\nabla$ is weakly stable,
then $I_{R^{\nabla}}(\alpha)\geq 0$
holds for all $\alpha\in\Omega^{1}(\mathfrak{g}_{P})$.
In the next section,
we will derive a sufficient condition
for the instability of
$F$-Yang-Mills connections
via analysis of the indices
for $F$-harmonic forms.

\section{A Simons type condition for instability of \texorpdfstring{$F$}{F}-Yang-Mills connections}\label{sec:Simons}

\subsection{Analysis of the indices for \texorpdfstring{$F$}{F}-harmonic forms (1)}\label{sec:FHarm_indx_analysis1}

Let $M$ be an $n$-dimensional,
connected, closed Riemannian manifold
and $D$ denote the Levi-Civita connection on $M$.
Let $P=P(M,G)$ be a principal fiber bundle
over $M$ with structure group $G$.
Suppose that
the base space
$M$ is isometrically immersed in an $N$-dimensional
Euclidean space $(\mathbb{R}^{N},\INN{\cdot}{\cdot})$
with $n<N$.
Denote by $h$ its second fundamental form.
We shall make use of the following convention
on the ranges of indices:
\begin{equation}
1\leq A, B, C \leq N,\quad
1\leq i,j,k,l, m \leq n,\quad
n+1 \leq \mu \leq N\,.
\end{equation}
Let $(e_{1},\dotsc,e_{n})$ be an orthonormal basis
of $T_{x}M$ ($x\in M$)
and $(e_{n+1},\dotsc,e_{N})$ be an orthonormal basis
of the normal space $T^{\perp}_{x}M$ of $M$ in $\mathbb{R}^{N}$.
Let $(E_{1},\dotsc,E_{N})$
be the canonical basis of $\mathbb{R}^{N}$.
We denote by $V_{A}$
the tangent component of $E_{A}$
with respect to the orthogonal decomposition
$\mathbb{R}^{N}=T_{x}M\oplus T^{\perp}_{x}M$.
If
we set $v_{A}^{B}=\INN{E_{A}}{e_{B}}$,
then the matrix
$(v_{A}^{B})_{1\leq A,B\leq N}$
becomes orthogonal.
The tangent vector field $V_{A}$
is given by $V_{A}=\sum_{i}v_{A}^{i}e_{i}$.
Let
$h^{\mu}_{ij}$
denote the component of
$h(e_{i},e_{j})=\sum_{\mu}h^{\mu}_{ij}e_{\mu}$.
Then
we get
the following lemma.

\begin{lem}
With the above settings,
we obtain:
\begin{equation}\label{eqn:DVA_form}
D_{e_{i}}V_{A}
=\sum_{j}\sum_{\mu}v_{A}^{\mu}h_{ij}^{\mu}e_{j}\,.
\end{equation}
\end{lem}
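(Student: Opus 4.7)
The plan is to exploit the fact that each canonical basis vector $E_{A}$ of $\mathbb{R}^{N}$ is parallel with respect to the flat connection $\tilde{D}$ on the ambient Euclidean space, so that $\tilde{D}_{e_{i}}E_{A}=0$. Decomposing $E_{A}=V_{A}+V_{A}^{\perp}$ with $V_{A}^{\perp}=\sum_{\mu}v_{A}^{\mu}e_{\mu}$, I would then recover $D_{e_{i}}V_{A}$ by comparing tangential and normal components via the Gauss and Weingarten formulas for the isometric immersion $M\hookrightarrow\mathbb{R}^{N}$.

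Concretely, from $\tilde{D}_{e_{i}}E_{A}=0$ I get $\tilde{D}_{e_{i}}V_{A}=-\tilde{D}_{e_{i}}V_{A}^{\perp}$. The Gauss formula yields $\tilde{D}_{e_{i}}V_{A}=D_{e_{i}}V_{A}+h(e_{i},V_{A})$, with the first summand tangential and the second normal, while the Weingarten formula yields $\tilde{D}_{e_{i}}V_{A}^{\perp}=-A_{V_{A}^{\perp}}(e_{i})+D^{\perp}_{e_{i}}V_{A}^{\perp}$, where $A_{\xi}$ denotes the shape operator in the direction $\xi$ and $D^{\perp}$ the induced normal connection. Extracting the tangential part of the resulting identity gives $D_{e_{i}}V_{A}=A_{V_{A}^{\perp}}(e_{i})$.

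To finish, I would expand the right-hand side in the frame $(e_{1},\dotsc,e_{n})$ using the defining relation $\INN{A_{\xi}(X)}{Y}=\INN{h(X,Y)}{\xi}$ of the shape operator together with $V_{A}^{\perp}=\sum_{\mu}v_{A}^{\mu}e_{\mu}$ and $h(e_{i},e_{j})=\sum_{\mu}h^{\mu}_{ij}e_{\mu}$. A direct computation then produces $\INN{A_{V_{A}^{\perp}}(e_{i})}{e_{j}}=\sum_{\mu}v_{A}^{\mu}h^{\mu}_{ij}$, from which formula \eqref{eqn:DVA_form} follows immediately.

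This is essentially a textbook identity of submanifold geometry, and I do not anticipate any real obstacle. The only point requiring care is to correctly separate the tangential and normal components via the Gauss--Weingarten decomposition and to use that the ambient vector field $E_{A}$ is constant in $\mathbb{R}^{N}$.
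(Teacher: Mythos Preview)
Your proposal is correct. Both your argument and the paper's rest on the same key observation---that the ambient vector $E_{A}$ is parallel in $\mathbb{R}^{N}$---but the implementations differ slightly. You invoke the full Gauss--Weingarten machinery: you apply the Gauss formula to the tangential part $V_{A}$ and the Weingarten formula to the normal part $V_{A}^{\perp}$, then read off the tangential component as $A_{V_{A}^{\perp}}(e_{i})$ and expand via the shape-operator identity $\INN{A_{\xi}X}{Y}=\INN{h(X,Y)}{\xi}$. The paper instead computes the coefficient $\INN{D_{e_{i}}V_{A}}{e_{j}}$ directly: it differentiates $\INN{V_{A}}{e_{j}}=\INN{E_{A}}{e_{j}}$ two ways (once with the intrinsic Levi-Civita connection, once with the ambient flat connection applied to $e_{j}$ via the Gauss formula) and compares. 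The paper's route is marginally more economical, avoiding any mention of the Weingarten map or shape operator; your route is more structural and makes transparent why the answer is exactly the shape operator of the normal component of $E_{A}$. Either way the computation is routine.
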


\begin{proof}
We write
$D_{e_{i}}V_{A}$
as
$D_{e_{i}}V_{A}
=\sum_{j}\INN{D_{e_{i}}V_{A}}{e_{j}}e_{j}$.
In order to prove
this lemma,
it is sufficient to 
verify $\INN{D_{e_{i}}V_{A}}{e_{j}}=\INN{E_{A}}{h(e_{i},e_{j})}$.
Since $M$ is isometrically immersed
in $(\mathbb{R}^{N},\INN{\cdot}{\cdot})$,
the Levi-Civita connection $D$ on $M$
is compatible with
$\INN{\cdot}{\cdot}$.
Hence we have
\begin{equation}\label{eqn:DeVe1}
e_{i}\INN{V_{A}}{e_{j}}
=\INN{D_{e_{i}}V_{A}}{e_{j}}+\INN{V_{A}}{D_{e_{i}}e_{j}}\,.
\end{equation}
On the other hand,
since $E_{A}$
is parallel with respect to 
the canonical connection
$D^{0}$ on $\mathbb{R}^{N}$,
we have
\begin{equation}\label{eqn:DeVe2}
e_{i}\INN{V_{A}}{e_{j}}
=e_{i}\INN{E_{A}}{e_{j}}
=\INN{E_{A}}{D^{0}_{e_{i}}e_{j}}
=\INN{V_{A}}{D_{e_{i}}e_{j}}+
\INN{E_{A}}{h(e_{i},e_{j})}\,.
\end{equation}
Here, in the last equality,
we have used the Gauss formula for the submanifold $M$ in $\mathbb{R}^{N}$.
By comparing
\eqref{eqn:DeVe1}
to
\eqref{eqn:DeVe2},
we get
$\INN{D_{e_{i}}V_{A}}{e_{j}}=\INN{E_{A}}{h(e_{i},e_{j})}$.
Thus, we have complete the proof.
\end{proof}

We evaluate
the indices for
$F$-harmonic $2$-forms.
More precisely,
we calculate the summation $\sum_{A}I_{\varphi}(\iota_{V_{A}}\varphi)$
for an $F$-harmonic $2$-form $\varphi$,
where $\iota$ denotes the interior product.
By Definition \ref{dfn:FHarm_indx}
we have
\begin{multline}\label{eqn:sumIVA_1+}
\sum_{A}I_{\varphi}(\iota_{V_{A}}\varphi)
=\int_{M}
F''(\dfrac{1}{2}\|\varphi\|^{2})\INN{\sum_{A}d^{\nabla}(\iota_{V_{A}}\varphi)}{\varphi}^{2}dv\\
+\int_{M}F'(\dfrac{1}{2}\|\varphi\|^{2})\left\{\INN{\sum_{A}\mathfrak{R}^{\nabla}(\iota_{V_{A}}\varphi)}{\iota_{V_{A}}\varphi}
+\sum_{A}\|d^{\nabla}(\iota_{V_{A}}\varphi)\|^{2}
\right\}dv\,.
\end{multline}

Following to \cite[(4.37)]{KOT},
we define $H(\varphi,\varphi)$ for any $\varphi\in\Omega^{2}(\mathfrak{g}_{P})$
as follows :
\begin{equation}\label{eqn:H_dfn}
H(\varphi,\varphi)
=\sum_{i,j,k,l}\sum_{\mu}H^{\mu}h_{ik}^{\mu}\delta_{jl}\INN{\varphi_{ij}}{\varphi_{kl}}\,,
\end{equation}
where 
$H^{\mu}=\sum_{m}h^{\mu}_{mm}$ denotes
the mean curvature of $M$ in $\mathbb{R}^{N}$.

On the other hand, in the present case
we introduce the following quantity.

\begin{dfn}\label{dfn:h_1_dfn}
We set 
\begin{equation}\label{eqn:h_1_dfn}
\bs{h}_{1}(\varphi,\varphi)
=\sum_{\mu}h_{1}^{\mu}(\varphi,\varphi)e_{\mu}\,,\quad
h_{1}^{\mu}(\varphi,\varphi)=
\sum_{i,j,k,l}h_{ik}^{\mu}\delta_{jl}\INN{\varphi_{ij}}{\varphi_{kl}}\,.
\end{equation}
\end{dfn}

Here, we note that
$H(\varphi,\varphi)$ and
$\bs{h}_{1}(\varphi,\varphi)$
are independent of the choice
of $(e_{1},\dotsc,e_{n})$ and $(e_{n+1},\dotsc,e_{N})$.
In addition, for each $\mu$,
the component
$h^{\mu}_{1}(\varphi,\varphi)$
of $\bs{h}_{1}(\varphi,\varphi)$
is also independent of the choice
of $(e_{1},\dotsc,e_{n})$.
As shown later in Theorem \ref{thm:sumI_cal},
$\bs{h}_{1}(\varphi,\varphi)$
is needed to evaluate the first term in
\eqref{eqn:sumIVA_1+}.

The purpose of this subsection
is to prove the following theorem.

\begin{thm}\label{thm:sumI_cal}
With the above settings,
we obtain:
\begin{multline}\label{eqn:sumI_cal}
\sum_{A}I_{\varphi}(\iota_{V_{A}}\varphi)
=\int_{M}F''(\dfrac{1}{2}\|\varphi\|^{2})\INN{\bs{h}_{1}(\varphi,\varphi)}{\bs{h}_{1}(\varphi,\varphi)}dv\\
+\int_{M}F'(\dfrac{1}{2}\|\varphi\|^{2})\left\{
H(\varphi,\varphi)-2\mathrm{Ric}(\varphi,\varphi)+R(\varphi,\varphi)
\right\}dv\,.
\end{multline}
\end{thm}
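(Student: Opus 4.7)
The plan is to extend the pointwise computation behind \cite[(4.37) Proposition]{KOT} to the $F$-Yang-Mills setting by carrying the weights $F'(f)$ and $F''(f)$ with $f=\|\varphi\|^{2}/2$ throughout, and invoking the $F$-harmonic conditions $d^{\nabla}\varphi=0$ and $\delta^{\nabla}(F'(f)\varphi)=0$ at the integration-by-parts stage.

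First I would expand $d^{\nabla}(\iota_{V_{A}}\varphi)$ pointwise. Using the Leibniz rule and invoking $d^{\nabla}\varphi=0$ one obtains $(d^{\nabla}(\iota_{V_{A}}\varphi))_{X,Y}=(\nabla_{V_{A}}\varphi)_{X,Y}+\varphi_{D_{X}V_{A},Y}-\varphi_{D_{Y}V_{A},X}$, which becomes explicit through \eqref{eqn:DVA_form}. Taking pointwise inner products and exploiting the orthogonality $\sum_{A}v_{A}^{B}v_{A}^{C}=\delta^{BC}$ (in particular the mixed vanishing $\sum_{A}v_{A}^{i}v_{A}^{\mu}=0$), I would establish three summation formulas: $\sum_{A}\INN{d^{\nabla}(\iota_{V_{A}}\varphi)}{\varphi}^{2}=\|df\|^{2}+\INN{\bs{h}_{1}(\varphi,\varphi)}{\bs{h}_{1}(\varphi,\varphi)}$; an expression for $\sum_{A}\|d^{\nabla}(\iota_{V_{A}}\varphi)\|^{2}$ in terms of $\|\nabla\varphi\|^{2}$ and quadratic terms in the $h_{ij}^{\mu}$; and an expression for $\sum_{A}\INN{\mathfrak{R}^{\nabla}(\iota_{V_{A}}\varphi)}{\iota_{V_{A}}\varphi}$ via \eqref{eqn:Rnabaa_aaR}. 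Combined, these yield the pointwise identity $\sum_{A}\{\INN{\mathfrak{R}^{\nabla}(\iota_{V_{A}}\varphi)}{\iota_{V_{A}}\varphi}+\|d^{\nabla}(\iota_{V_{A}}\varphi)\|^{2}\}=H(\varphi,\varphi)-\mathrm{Ric}(\varphi,\varphi)+\tfrac{1}{2}R(\varphi,\varphi)+\INN{\mathfrak{R}^{\nabla}(\varphi)}{\varphi}+\|\nabla\varphi\|^{2}$.

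Next I would apply the Bochner-Weitzenb\"ock formula (Proposition \ref{pro:BWformula}) to $\varphi$ and pair it with $F'(f)\varphi$. Since $d^{\nabla}\varphi=0$ gives $\Delta^{\nabla}\varphi=d^{\nabla}\delta^{\nabla}\varphi$, the $F$-harmonic condition $\delta^{\nabla}(F'(f)\varphi)=0$ yields $\int_{M}F'(f)\INN{\Delta^{\nabla}\varphi}{\varphi}\,dv=\int_{M}\INN{\delta^{\nabla}\varphi}{\delta^{\nabla}(F'(f)\varphi)}\,dv=0$. Coupling this with Lemma \ref{lem:RicwedgeI+2R_cal} and the scalar Bochner identity $\tfrac{1}{2}\Delta\|\varphi\|^{2}=\INN{\nabla^{*}\nabla\varphi}{\varphi}-\|\nabla\varphi\|^{2}$, which gives $\int_{M}F''(f)\|df\|^{2}\,dv=\int_{M}F'(f)\{\INN{\nabla^{*}\nabla\varphi}{\varphi}-\|\nabla\varphi\|^{2}\}\,dv$, produces the integrated identity $\int_{M}F''(f)\|df\|^{2}\,dv+\int_{M}F'(f)\{\mathrm{Ric}(\varphi,\varphi)-\tfrac{1}{2}R(\varphi,\varphi)+\INN{\mathfrak{R}^{\nabla}(\varphi)}{\varphi}+\|\nabla\varphi\|^{2}\}\,dv=0$. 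Substituting this together with the pointwise identities of the previous paragraph into the definition of $\sum_{A}I_{\varphi}(\iota_{V_{A}}\varphi)$, the spurious $\|df\|^{2}$-piece cancels and the remainder collapses to the stated expression.

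The main obstacle is precisely the cancellation of the $\int_{M}F''(f)\|df\|^{2}\,dv$ contribution, which is absent in the Kobayashi-Ohnita-Takeuchi case since $F''=0$ there. Controlling it requires combining the product-rule expansion of $\delta^{\nabla}(F'(f)\varphi)$ with both the 2-form Bochner-Weitzenb\"ock identity and the scalar Bochner identity, so that the $F'$-weighted $\INN{\nabla^{*}\nabla\varphi}{\varphi}$ term produced by Proposition \ref{pro:BWformula} matches exactly the extra $F''$-contribution arising from the pointwise expansion.
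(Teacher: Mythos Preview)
Your proposal is correct and follows essentially the same route as the paper. The paper packages your three summation formulas as separate lemmas (Lemmas \ref{lem_sumR_R}, \ref{lem:sumdvp_sumdvpp}, \ref{lem:h2h2'RHRic}) and your Bochner-Weitzenb\"ock step as Proposition \ref{pro:FHarm_BW}; the only cosmetic difference is that the paper obtains the integrated identity by computing $\Delta F(f)$ directly and applying Green's theorem, whereas you reach the same relation by combining the adjoint identity $\int_{M}\INN{\delta^{\nabla}\varphi}{\delta^{\nabla}(F'(f)\varphi)}\,dv=0$ with the scalar Bochner formula, but these are equivalent manipulations.
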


This theorem
is an extension of
\cite[(4.37) Proposition]{KOT} to $F$-harmonic
forms.

In order to prove
Theorem \ref{thm:sumI_cal},
we first prepare some results.

\begin{lem}\label{lem_sumR_R}
For any $\varphi\in\Omega^{2}(\mathfrak{g}_{P})$,
we have
\begin{equation}\label{eqn:Rvapvap_Rpp}
\sum_{A}\INN{\mathfrak{R}^{\nabla}(\iota_{V_{A}}\varphi)}{\iota_{V_{A}}\varphi}
=\INN{\mathfrak{R}^{\nabla}(\varphi)}{\varphi}\,.
\end{equation}
\end{lem}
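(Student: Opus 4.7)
The plan is to expand both sides in components relative to the orthonormal frame $(e_1,\dotsc,e_n)$ and reduce the identity to the completeness relation for the orthogonal change-of-basis matrix between $(E_A)_{A=1}^N$ and $(e_B)_{B=1}^N$. This is essentially the computation that appears in Kobayashi--Ohnita--Takeuchi for the usual Weitzenb\"ock term, and it is algebraic; no feature of $F$ enters, because the $\mathfrak{R}^\nabla$-term in the index is independent of $F$.

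First I would write out the left-hand side. Since $V_A=\sum_k v_A^k e_k$ with $v_A^k=\INN{E_A}{e_k}$, the interior product gives the $\mathfrak{g}_P$-valued $1$-form $(\iota_{V_A}\varphi)_{e_j}=\sum_k v_A^k\varphi_{kj}$. Applying the formula \eqref{eqn:WCurv_order1} for $\mathfrak{R}^\nabla$ on $\Omega^1(\mathfrak{g}_P)$ together with the pointwise inner product on $1$-forms yields
\begin{equation}
\INN{\mathfrak{R}^\nabla(\iota_{V_A}\varphi)}{\iota_{V_A}\varphi}
=\sum_{i,j,k,l}v_A^k v_A^l\INN{[R^\nabla_{ji},\varphi_{kj}]}{\varphi_{li}}.
\end{equation}
Summing over $A$ and invoking the orthogonality of the matrix $(v_A^B)_{1\le A,B\le N}$, the key identity $\sum_A v_A^k v_A^l=\INN{e_k}{e_l}=\delta_{kl}$ collapses the double sum and produces
\begin{equation}
\sum_A\INN{\mathfrak{R}^\nabla(\iota_{V_A}\varphi)}{\iota_{V_A}\varphi}
=\sum_{i,j,k}\INN{[R^\nabla_{ji},\varphi_{kj}]}{\varphi_{ki}}.
\end{equation}

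Next I would expand the right-hand side. Using the formula \eqref{eqn:WCurv_order2} for $\mathfrak{R}^\nabla$ on $\Omega^2(\mathfrak{g}_P)$ together with the pointwise inner product $\INN{\varphi}{\psi}=\frac{1}{2}\sum_{i,j}\INN{\varphi_{ij}}{\psi_{ij}}$ on $2$-forms, one obtains two sums corresponding to the two terms in \eqref{eqn:WCurv_order2}. Relabeling $i\leftrightarrow j$ in the second sum and using the antisymmetries $\varphi_{ij}=-\varphi_{ji}$ and $R^\nabla_{ij}=-R^\nabla_{ji}$ shows that the two sums are equal, so the factor $\tfrac{1}{2}$ disappears and one gets $\INN{\mathfrak{R}^\nabla(\varphi)}{\varphi}=\sum_{i,j,k}\INN{[R^\nabla_{ki},\varphi_{kj}]}{\varphi_{ij}}$. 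After the change of dummy indices $(i,j,k)\mapsto(j,i,k)$ combined with the antisymmetry of $\varphi$ in both slots, this matches the expression derived above, completing the proof.

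The main obstacle is purely bookkeeping: one has to verify that the orthonormal-basis completeness relation $\sum_A v_A^k v_A^l=\delta_{kl}$ really holds even though $V_A$ is only the \emph{tangent} component of $E_A$ (it does, because the normal components $v_A^\mu$ sit in different entries of the same orthogonal matrix and simply do not appear in the sum), and that the index relabelings together with the antisymmetries of $\varphi$ and $R^\nabla$ indeed match the two sides sign-for-sign.
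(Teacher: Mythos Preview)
Your proof is correct and follows essentially the same route as the paper: expand both sides in components, use the orthogonality relation $\sum_A v_A^k v_A^l=\delta_{kl}$ to collapse the sum over $A$, and match the resulting triple sums. The only cosmetic difference is that the paper rewrites the left-hand side via the identity $\INN{\mathfrak{R}^{\nabla}(\alpha)}{\alpha}=\INN{[\alpha\wedge\alpha]}{R^{\nabla}}$ and then handles the right-hand side with explicit $\mathrm{Ad}(G)$-invariance manipulations, whereas you plug directly into the component formulas \eqref{eqn:WCurv_order1} and \eqref{eqn:WCurv_order2}; one small correction: the relabeling that makes the two sides coincide is $(i,j,k)\mapsto(i,k,j)$ (swap $j\leftrightarrow k$) together with the antisymmetry of $\varphi$, not $(i,j,k)\mapsto(j,i,k)$.
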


\begin{proof}
We express $\varphi$ and $R^{\nabla}$ as follows:
\begin{equation}
\varphi=\dfrac{1}{2}\sum_{i,j}\varphi_{e_{i},e_{j}}\theta^{i}\wedge\theta^{j},\quad
R^{\nabla}
=\dfrac{1}{2}\sum_{i,j}
R^{\nabla}_{e_{i},e_{j}}\theta^{i}\wedge\theta^{j}\,.
\end{equation}
Then, by \eqref{eqn:Rnabaa_aaR},
we have
\begin{align}
\text{the l.h.s.~of \eqref{eqn:Rvapvap_Rpp}}
&= \sum_{A}\INN{[\iota_{V_{A}}\varphi\wedge\iota_{V_{A}}\varphi]}{R^{\nabla}}\\
&=\dfrac{1}{2}\sum_{A}\sum_{i,j}
2\INN{[(\iota_{V_{A}}\varphi)_{e_{i}},(\iota_{V_{A}}\varphi)_{e_{j}}]}{R^{\nabla}_{e_{i},e_{j}}}\\
&=\sum_{i,j,k}\INN{[\varphi_{e_{k},e_{i}},\varphi_{e_{k},e_{j}}]}{R^{\nabla}_{e_{i},e_{j}}}\,.\label{eqn:Rvapvap_Rpp_lft}
\end{align}
Here, in the last equality,
we have used
$(\iota_{V_{A}}\varphi)_{e_{i}}
=\sum_{k}v_{A}^{k}\varphi_{e_{k},e_{i}}$.
On the other hand,
by the $\mathrm{Ad}(G)$-invariance of $\INN{\cdot}{\cdot}$,
we get the following two relations:
\begin{align}
\INN{[R^{\nabla}_{e_{i},e_{k}},\varphi_{e_{i},e_{j}}]}{\varphi_{e_{j},e_{k}}}
&=-\INN{\varphi_{e_{i},e_{j}}}{[R^{\nabla}_{e_{i},e_{k}},\varphi_{e_{j},e_{k}}]}\,,\\
\INN{[R^{\nabla}_{e_{i},e_{j}},\varphi_{e_{i},e_{k}}]}{\varphi_{e_{j},e_{k}}}
&=
\INN{[\varphi_{e_{k},e_{i}},\varphi_{e_{k},e_{j}}]}{R^{\nabla}_{e_{i},e_{j}}}\,.
\end{align}
By using these relations,
we obtain
\begin{align}
\text{
the r.h.s.~of
\eqref{eqn:Rvapvap_Rpp}}
&=\dfrac{1}{2}\sum_{i,j,k}
\left\{
\INN{[R^{\nabla}_{e_{i},e_{j}},\varphi_{e_{i},e_{k}}]}{\varphi_{e_{j},e_{k}}}
-\INN{[R^{\nabla}_{e_{i},e_{k}},\varphi_{e_{i},e_{j}}]}{\varphi_{e_{j},e_{k}}}
\right\}\\
&=\dfrac{1}{2}\sum_{i,j,k}
\left\{
\INN{[R^{\nabla}_{e_{i},e_{j}},\varphi_{e_{i},e_{k}}]}{\varphi_{e_{j},e_{k}}}
+\INN{\varphi_{e_{i},e_{j}}}{[R^{\nabla}_{e_{i},e_{k}},\varphi_{e_{j},e_{k}}]}
\right\}\\
&=\sum_{i,j,k}\INN{[R^{\nabla}_{e_{i},e_{j}},\varphi_{e_{i},e_{k}}]}{\varphi_{e_{j},e_{k}}}\\
&=\sum_{i,j,k}\INN{[\varphi_{e_{k},e_{i}},\varphi_{e_{k},e_{j}}]}{R^{\nabla}_{e_{i},e_{j}}}\,.\label{eqn:Rvapvap_Rpp_rgt}
\end{align}
Comparing \eqref{eqn:Rvapvap_Rpp_lft}
to \eqref{eqn:Rvapvap_Rpp_rgt}
we have the assertion.
\end{proof}

We define
$h_{2}(\varphi,\varphi)$
and $h_{2}'(\varphi,\varphi)$
for $\varphi\in\Omega^{2}(\mathfrak{g}_{E})$
as follows:
\begin{equation}
h_{2}(\varphi,\varphi)
=\sum_{i,j,k,l}\sum_{\mu}h_{ik}^{\mu}h_{lj}^{\mu}\INN{\varphi_{ij}}{\varphi_{kl}},
\quad
h'_{2}(\varphi,\varphi)
=\sum_{i,j,k,l,m}\sum_{\mu}
h_{mk}^{\mu}h_{mi}^{\mu}\delta_{jl}\INN{\varphi_{ij}}{\varphi_{kl}}\,.
\end{equation}
By the definition,
$h(\varphi,\varphi)$ and
$h_{2}'(\varphi,\varphi)$
are independent of the choice
of $(e_{1},\dotsc,e_{n})$ and $(e_{n+1},\dotsc,e_{N})$.
Then, we have
the following lemma.

\begin{lem}\label{lem:sumdvp_sumdvpp}
Let $\varphi$ be in $\Omega^{2}(\mathfrak{g}_{P})$
satisfying $d^{\nabla}\varphi=0$.
Then we have:
\begin{enumerate}[(1)]
\item $\displaystyle\sum_{A}\|d^{\nabla}(\iota_{V_{A}}\varphi)\|^{2}
=\|\nabla\varphi\|^{2}+h_{2}(\varphi,\varphi)
+h_{2}'(\varphi,\varphi)$.
\item[]
\item $\displaystyle\sum_{A}\INN{d^{\nabla}(\iota_{V_{A}}\varphi)}{\varphi}^{2}
=\|\varphi\|^{2}\|\nabla\|\varphi\|\|^{2}
+\INN{\bs{h}_{1}(\varphi,\varphi)}{\bs{h}_{1}(\varphi,\varphi)}$.
\end{enumerate}
\end{lem}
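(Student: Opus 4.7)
The plan is to compute $d^\nabla(\iota_{V_A}\varphi)$ as explicitly as possible in the frame $(e_i)$, separate the result into a tangential and a normal piece in the index $A$, and then sum over $A$ using the orthogonality relation $\sum_A v_A^B v_A^C = \delta_{BC}$ coming from the fact that $(v_A^B)$ is an orthogonal matrix.

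Applying the Cartan-type formula \eqref{eqn:dnab_nab} to the $\mathfrak{g}_P$-valued $1$-form $\iota_{V_A}\varphi$ and expanding $\nabla_X(\iota_{V_A}\varphi)$ by the Leibniz rule, I would first derive
\begin{equation}
(d^\nabla(\iota_{V_A}\varphi))_{X,Y} = \varphi(D_X V_A, Y) - \varphi(D_Y V_A, X) + (\nabla_X\varphi)(V_A, Y) - (\nabla_Y\varphi)(V_A, X).
\end{equation}
The hypothesis $d^\nabla\varphi = 0$, applied to the triple $(V_A, X, Y)$, collapses the last two terms into $(\nabla_{V_A}\varphi)(X, Y)$. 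Specializing to $(X,Y) = (e_i, e_j)$ and inserting \eqref{eqn:DVA_form} yields
\begin{equation}
(d^\nabla(\iota_{V_A}\varphi))_{ij} = \sum_m v_A^m (\nabla_{e_m}\varphi)_{ij} + \sum_{k,\mu} v_A^\mu \bigl(h_{ik}^\mu \varphi_{kj} + h_{jk}^\mu \varphi_{ik}\bigr),
\end{equation}
which splits cleanly into a tangential contribution (carrying $v_A^m$) and a normal contribution (carrying $v_A^\mu$).

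For assertion (1), I take $\|\cdot\|^2$ of both sides and sum over $A$. The cross term between the two pieces vanishes because $\sum_A v_A^m v_A^\mu = \delta_{m\mu} = 0$. The squared tangential piece collapses to $\sum_m \|\nabla_{e_m}\varphi\|^2 = \|\nabla\varphi\|^2$, and the squared normal piece expands into four $h^\mu h^\mu$ terms; relabeling dummy indices and using the symmetry $h_{ij}^\mu = h_{ji}^\mu$ of the second fundamental form together with the skew-symmetry $\varphi_{ij} = -\varphi_{ji}$, two of these terms combine to $h_2'(\varphi,\varphi)$ and the remaining two to $h_2(\varphi,\varphi)$. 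For assertion (2), I instead pair with $\varphi$. The tangential contribution becomes $\sum_m v_A^m \INN{\nabla_{e_m}\varphi}{\varphi}$, which, via $\INN{\nabla_{e_m}\varphi}{\varphi} = \tfrac{1}{2}e_m(\|\varphi\|^2) = \|\varphi\|\, e_m(\|\varphi\|)$, reduces to $\|\varphi\|\, V_A(\|\varphi\|)$. The normal contribution, after using antisymmetry of $\varphi$ to merge its two summands, becomes $\sum_\mu v_A^\mu\, h_1^\mu(\varphi,\varphi)$. Squaring, summing over $A$, and invoking orthogonality once more: the cross term vanishes, the tangential square gives $\|\varphi\|^2 \sum_m (e_m\|\varphi\|)^2 = \|\varphi\|^2 \|\nabla\|\varphi\|\|^2$, and the normal square gives $\sum_\mu (h_1^\mu)^2 = \INN{\bs{h}_1(\varphi,\varphi)}{\bs{h}_1(\varphi,\varphi)}$.

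The main obstacle is the bookkeeping in (1): one must carefully relabel the dummy indices in the four-term expansion of the squared normal contribution and invoke both the symmetry of $h$ and the antisymmetry of $\varphi$ at precisely the right spots to recognize the two pieces as $h_2$ and $h_2'$; once that identification is made, the rest is a routine application of the orthogonality of $(v_A^B)$.
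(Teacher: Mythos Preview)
Your argument is correct and follows the same overall strategy as the paper: compute $(d^\nabla(\iota_{V_A}\varphi))_{ij}$ in the frame, split it into a tangential piece (carrying $v_A^m$) and a normal piece (carrying $v_A^\mu$), and then sum over $A$ using the orthogonality $\sum_A v_A^B v_A^C=\delta_{BC}$. The one organizational difference is \emph{where} the hypothesis $d^\nabla\varphi=0$ is invoked. You use it at the outset to collapse $(\nabla_X\varphi)(V_A,Y)-(\nabla_Y\varphi)(V_A,X)$ into $(\nabla_{V_A}\varphi)(X,Y)$, so that the tangential contribution is a single term and its square (resp.\ its pairing with $\varphi$) immediately yields $\|\nabla\varphi\|^2$ (resp.\ $\|\varphi\|\,V_A(\|\varphi\|)$). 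The paper instead keeps both $\nabla$-terms in the expression \eqref{eqn:dnabala_1_1}, squares first, and only then applies the cyclic relation \eqref{eqn:nabla_cirlesum} to reduce $\sum_{i,j,k}\bigl\{\INN{\nabla_i\varphi_{kj}}{\nabla_i\varphi_{kj}}-\INN{\nabla_i\varphi_{kj}}{\nabla_j\varphi_{ki}}\bigr\}$ to $\|\nabla\varphi\|^2$, with an analogous late simplification in part (2). Your ordering is slightly more economical and avoids that extra reduction step; otherwise the two proofs coincide.
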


\begin{proof}
Let $\varphi\in\Omega^{2}(\mathfrak{g}_{P})$
with $d^{\nabla}\varphi=0$.
We express $\varphi$ and $\nabla\varphi$
as follows:
\begin{equation}
\varphi=\dfrac{1}{2}\sum_{i,j}\varphi_{ij}\theta^{i}\wedge\theta^{j}\,,
\quad
\nabla\varphi
=\dfrac{1}{2}\sum_{i,j,k}\nabla_{k}\varphi_{ij}\theta^{k}\otimes(\theta^{i}\wedge\theta^{j})\,.
\end{equation}
(1) If we write
\begin{equation}
d^{\nabla}(\iota_{V_{A}}\varphi)=\dfrac{1}{2}\sum_{i,j}(d^{\nabla}(\iota_{V_{A}}\varphi))_{e_{i},e_{j}}\theta^{i}\wedge\theta^{j}\,,
\end{equation}
then, by means of \eqref{eqn:dnab_nab},
the component $(d^{\nabla}(\iota_{V_{A}}\varphi))_{e_{i},e_{j}}$
has the following expression:
\begin{multline}\label{eqn:dnabala_1_1}
(d^{\nabla}(\iota_{V_{A}}\varphi))_{e_{i},e_{j}}
=
\sum_{k}\sum_{\mu} v_{A}^{\mu}h_{ik}^{\mu}\varphi_{kj}
+\sum_{k} v_{A}^{k}(\nabla_{e_{i}}\varphi)_{e_{k},e_{j}}\\
-\left\{\sum_{k}\sum_{\mu} v_{A}^{\mu}h_{jk}^{\mu}\varphi_{ki}
+\sum_{k} v_{A}^{k}(\nabla_{e_{j}}\varphi)_{e_{k},e_{i}}\right\}\,.
\end{multline}
Then, we have
\begin{align}
\sum_{A}\|d^{\nabla}(\iota_{V_{A}}\varphi)\|^{2}
&=\dfrac{1}{2}\sum_{A}\sum_{i,j}\INN{
d^{\nabla}(\iota_{V_{A}}\varphi)_{e_{i},e_{j}}}{
d^{\nabla}(\iota_{V_{A}}\varphi)_{e_{i},e_{j}}}\\
&=\sum_{i,j,k}\left\{
\INN{\nabla_{i}\varphi_{kj}}{\nabla_{i}\varphi_{kj}}-\INN{\nabla_{i}\varphi_{kj}}{\nabla_{j}\varphi_{ki}}\right\}\notag\\
&\phantom{hogehoge}
+\sum_{i,j,k,l}
\sum_{\mu}\left\{
h_{ik}^{\mu}h_{il}^{\mu}
\INN{\varphi_{kj}}{\varphi_{lj}}
-h_{ik}^{\mu}h_{jl}^{\mu}\INN{\varphi_{kj}}{\varphi_{li}}
\right\}\,.\label{eqn:dnabala_1_2}
\end{align}
It can be shown that
the second term of
\eqref{eqn:dnabala_1_2}
coincides with
$h_{2}(\varphi,\varphi)+h_{2}'(\varphi,\varphi)$.
On the other hand,
we make use of 
the condition $d^{\nabla}\varphi=0$
in order to verify
that
the first term of
\eqref{eqn:dnabala_1_2}
is equal to $\|\nabla\varphi\|^{2}$.
Here, 
$d^{\nabla}\varphi=0$
yields
\begin{equation}\label{eqn:nabla_cirlesum}
\nabla_{i}\varphi_{kj}
+\nabla_{j}\varphi_{ik}
+\nabla_{k}\varphi_{ji}=0\,.
\end{equation}
By using this we obtain
\begin{align}
\sum_{i,j,k}\INN{\nabla_{i}\varphi_{kj}}{\nabla_{j}\varphi_{ki}}
&=\sum_{i,j,k}\INN{\nabla_{i}\varphi_{kj}}{\nabla_{i}\varphi_{kj}}
+\sum_{i,j,k}\INN{\nabla_{i}\varphi_{kj}}{\nabla_{k}\varphi_{ji}}\\
&=\sum_{i,j,k}\INN{\nabla_{i}\varphi_{kj}}{\nabla_{i}\varphi_{kj}}
+\sum_{i,j,k}\INN{\nabla_{j}\varphi_{ik}}{\nabla_{i}\varphi_{kj}}\,,
\end{align}
that is,
\begin{equation}
\sum_{i,j,k}\INN{\nabla_{i}\varphi_{kj}}{\nabla_{j}\varphi_{ki}}
=\dfrac{1}{2}\sum_{i,j,k}
\INN{\nabla_{i}\varphi_{kj}}{\nabla_{i}\varphi_{kj}}\,.
\end{equation}
From the above arguments
we obtain (1).

(2) By means of
\eqref{eqn:dnabala_1_1},
we have
\begin{align}
&\sum_{A}\INN{d^{\nabla}(\iota_{V_{A}}\varphi)}{\varphi}\\
&=\sum_{\substack{i,j,k\\ i',j'}}
\INN{\varphi_{ij}}{\nabla_{i}\varphi_{kj}}
\INN{\varphi_{i'j'}}{\nabla_{i'}\varphi_{kj'}}+\sum_{\substack{i,j,k\\ i',j',k'}}
\sum_{\mu}h_{ik}^{\mu}h_{i'k'}^{\mu}\INN{\varphi_{ij}}{\varphi_{kj}}
\INN{\varphi_{i'j'}}{\varphi_{k'j'}}
\\
&=
\sum_{k}
\left(\sum_{i,j}
\INN{\varphi_{ij}}{\nabla_{i}\varphi_{kj}}\right)^{2}+\sum_{\substack{i,j,k,l\\ i',j',k',l'}}
\sum_{\mu}h_{ik}^{\mu}h_{i'k'}^{\mu}\delta_{jl}\delta_{j'l'}\INN{\varphi_{ij}}{\varphi_{kl}}
\INN{\varphi_{i'j'}}{\varphi_{k'l'}}\,.\label{eqn:dnabvapp_1}
\end{align}
Then, we can verify
that
the second term of the right hand side
of
\eqref{eqn:dnabvapp_1}
coincides with
$\INN{\bs{h}_{1}(\varphi,\varphi)}{\bs{h}_{1}(\varphi,\varphi)}$.
By using
\eqref{eqn:nabla_cirlesum},
we get
\begin{equation}
\sum_{i,j}\INN{\varphi_{ij}}{\nabla_{i}\varphi_{kj}}
=\dfrac{1}{2}\sum_{i,j}\INN{\varphi_{ij}}{\nabla_{k}\varphi_{ij}}
=\INN{\varphi}{\nabla_{e_{k}}\varphi}
=\|\varphi\|(\nabla_{e_{k}}\|\varphi\|)\,.
\end{equation}
Substituting this into
the first term of
the right hand side of \eqref{eqn:dnabvapp_1},
we have
\begin{equation}
\sum_{k}
\left(\sum_{i,j}
\INN{\varphi_{ij}}{\nabla_{i}\varphi_{kj}}\right)^{2}
=\|\varphi\|^{2}\sum_{k}\left(
\nabla_{e_{k}}\|\varphi\|
\right)^{2}
=\|\varphi\|^{2}\|\nabla\|\varphi\|\|^{2}\,.
\end{equation}
From the above arguments
we have complete the proof of this lemma.
\end{proof}

Here, we rewrite
$h_{2}(\varphi,\varphi)$
and
$h_{2}'(\varphi,\varphi)$
in terms of
$\mathrm{Ric}(\varphi,\varphi)$,
$R(\varphi,\varphi)$ and
$H(\varphi,\varphi)$.

\begin{lem}\label{lem:h2h2'RHRic}
For any
$\varphi\in\Omega^{2}(\varphi,\varphi)$,
we get:
\begin{equation}\label{eqn:h2h2'RHRic}
h_{2}(\varphi,\varphi)
=\dfrac{1}{2}R(\varphi,\varphi)\,,\quad
h_{2}'(\varphi,\varphi)
=H(\varphi,\varphi)-\mathrm{Ric}(\varphi,\varphi)\,.
\end{equation}
\end{lem}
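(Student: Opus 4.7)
The plan is to reduce both identities to the Gauss equation for the isometric immersion $M\hookrightarrow\mathbb{R}^N$ and then exploit the symmetries of the second fundamental form and of $\varphi$. Recall that the Gauss equation reads
\begin{equation}
R_{ijkl}=\sum_{\mu}\bigl(h_{ik}^{\mu}h_{jl}^{\mu}-h_{il}^{\mu}h_{jk}^{\mu}\bigr),
\end{equation}
with our convention $R(e_{k},e_{l})e_{j}=\sum_{i}R_{ijkl}e_{i}$. Contracting on $l=j$ then yields
\begin{equation}
R_{ik}=\sum_{\mu}H^{\mu}h_{ik}^{\mu}-\sum_{m,\mu}h_{im}^{\mu}h_{mk}^{\mu}.
\end{equation}
These two formulas are the only nontrivial inputs to the proof.

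For the first identity, I would start from the definition of $h_{2}(\varphi,\varphi)$, use the symmetry $h^{\mu}_{lj}=h^{\mu}_{jl}$, and apply the Gauss equation to the product $h_{ik}^{\mu}h_{jl}^{\mu}$. This splits $h_{2}(\varphi,\varphi)$ into $R(\varphi,\varphi)$ plus a residual term
\begin{equation}
A=\sum_{i,j,k,l}\sum_{\mu}h_{il}^{\mu}h_{jk}^{\mu}\INN{\varphi_{ij}}{\varphi_{kl}}.
\end{equation}
The key observation is that, relabeling $k\leftrightarrow l$ in $A$ and using $\varphi_{lk}=-\varphi_{kl}$ together with the symmetry $h^{\mu}_{jl}=h^{\mu}_{lj}$, one recovers exactly $-h_{2}(\varphi,\varphi)$. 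The identity $h_{2}(\varphi,\varphi)=R(\varphi,\varphi)-h_{2}(\varphi,\varphi)$ then gives the asserted formula after dividing by $2$.

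For the second identity, I would substitute the Gauss-traced expression for $R_{ik}$ directly into the definition of $\mathrm{Ric}(\varphi,\varphi)$. The first summand produces $H(\varphi,\varphi)$ by definition, while the second summand is, after using the symmetry of $h^\mu$ once to rewrite $h_{im}^{\mu}h_{mk}^{\mu}$ as $h_{mi}^{\mu}h_{mk}^{\mu}$, precisely $h_{2}'(\varphi,\varphi)$. Solving for $h_{2}'(\varphi,\varphi)$ yields $H(\varphi,\varphi)-\mathrm{Ric}(\varphi,\varphi)$.

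The main obstacle is really just bookkeeping: making sure every symmetry (two for $h^\mu$, one for $\varphi$) is applied to the right pair of indices and that the signs produced by the skew-symmetry of $\varphi$ line up correctly in the cancellation step of the first identity. There is no analytic difficulty, only careful index manipulation under the sums.
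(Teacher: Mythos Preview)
Your proposal is correct and follows essentially the same approach as the paper: both proofs rest entirely on the Gauss equation $R_{ijkl}=\sum_\mu(h_{ik}^\mu h_{jl}^\mu - h_{il}^\mu h_{jk}^\mu)$ and its contraction $R_{ik}=\sum_\mu(H^\mu h_{ik}^\mu-\sum_m h_{im}^\mu h_{mk}^\mu)$, together with the symmetry of $h^\mu$ and the skew-symmetry of $\varphi$. The only cosmetic difference is that for the first identity the paper starts from $R(\varphi,\varphi)$ and shows it equals $2h_2(\varphi,\varphi)$ in one line, whereas you start from $h_2(\varphi,\varphi)$ and isolate the residual term $A$ before relabeling; these are the same computation read in opposite directions.
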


\begin{proof}
It follows from
the Gauss equation
for $M$ in $\mathbb{R}^{N}$
(\cite[Proposition 4.1, Chapter VII]{KN2})
that the following relation holds:
\begin{equation}\label{eqn:curv_Gauss}
R_{ijkl}=\sum_{\mu}(h_{ik}^{\mu}h_{jl}^{\mu}-h_{jk}^{\mu}h_{il}^{\mu})\,.
\end{equation}
Then we obtain
$R(\varphi,\varphi)
=\sum_{i,j,k,l}\sum_{\mu}
(h_{ik}^{\mu}h_{jl}^{\mu}-h_{jk}^{\mu}h_{il}^{\mu})\INN{\varphi_{ij}}{\varphi_{kl}}
=2h_{2}(\varphi,\varphi)$.
On the other hand,
\eqref{eqn:curv_Gauss}
obeys
$R_{ik}=\sum_{\mu}\left(
H^{\mu}h_{ik}
-\sum_{m}h_{im}^{\mu}h_{mk}^{\mu}
\right)$,
from which
we can derive
$\mathrm{Ric}(\varphi,\varphi)
=H(\varphi,\varphi)-h_{2}'(\varphi,\varphi)$.
Thus,
we have proved this lemma.
\end{proof}

Substituting \eqref{eqn:h2h2'RHRic}
into Lemma \ref{lem:sumdvp_sumdvpp}, (1)
we have
\begin{equation}\label{eqn:sumdvp_sumdvpp1'}
\sum_{A}\|d^{\nabla}(\iota_{V_{A}}\varphi)\|^{2}
=\|\nabla\varphi\|^{2}
+\dfrac{1}{2}R(\varphi,\varphi)
+H(\varphi,\varphi)-\mathrm{Ric}(\varphi,\varphi)\,.
\end{equation}
We also get
\begin{equation}\label{lem:h2h2'RHRic'}
H(\varphi,\varphi)
-2\mathrm{Ric}(\varphi,\varphi)+R(\varphi,\varphi)
=-H(\varphi,\varphi)+2(h_{2}(\varphi,\varphi)+h_{2}'(\varphi,\varphi))\,.
\end{equation}

The Bochner-Weitzenb\"ock formula
gives a way to calculate
the differential of an $F$-harmonic form.
Indeed,
we make use of this formula
to prove the following theorem,
which is a generalization of
\cite[Lemma 8]{JZ12}.

\begin{pro}[]\label{pro:FHarm_BW}
For any $F$-harmonic form
$\varphi\in\Omega^{2}(\mathfrak{g}_{P})$,
we have:
\begin{multline}\label{eqn:FHarm_BW0}
\int_{M}F''(\dfrac{1}{2}\|\varphi\|^{2})\|\varphi\|^{2}\|\nabla\|\varphi\|\|^{2}dv
+\int_{M}F'(\dfrac{1}{2}\|\varphi\|^{2})\left\{
\INN{\mathfrak{R}^{\nabla}(\varphi)}{\varphi}+
\|\nabla\varphi\|^{2}
\right\}dv\\
=-\int_{M}F'(\dfrac{1}{2}\|\varphi\|^{2})\INN{\varphi\circ(\mathrm{Ric}\wedge I+2R)}{\varphi}dv\,.
\end{multline}
\end{pro}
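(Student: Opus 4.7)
My plan is to pair the Bochner-Weitzenb\"ock formula \eqref{eqn:BWformula} pointwise with $f\varphi$, where $f := F'(\tfrac{1}{2}\|\varphi\|^{2})$, and integrate over $M$. Both conditions in Definition \ref{dfn:FHarm_dfn} will be used: $d^{\nabla}\varphi=0$ collapses $\Delta^{\nabla}\varphi$ to $d^{\nabla}\delta^{\nabla}\varphi$, and $\delta^{\nabla}(f\varphi)=0$ is exactly what kills the integrated Hodge-Laplacian contribution.

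First I would show that $\int_{M} f \INN{\Delta^{\nabla}\varphi}{\varphi}\,dv = 0$: since $d^{\nabla}\varphi=0$, the adjointness of $d^{\nabla}$ and $\delta^{\nabla}$ (applied to the pair consisting of the $1$-form $\delta^{\nabla}\varphi$ and the $2$-form $f\varphi$) yields
\begin{equation*}
\int_{M} f\INN{\Delta^{\nabla}\varphi}{\varphi}\,dv
= \int_{M}\INN{d^{\nabla}\delta^{\nabla}\varphi}{f\varphi}\,dv
= \int_{M}\INN{\delta^{\nabla}\varphi}{\delta^{\nabla}(f\varphi)}\,dv = 0
\end{equation*}
by $F$-harmonicity. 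Next, integration by parts for $\nabla^{*}\nabla$ together with the Leibniz rule gives
\begin{equation*}
\int_{M} f\INN{\nabla^{*}\nabla\varphi}{\varphi}\,dv
= \int_{M}\INN{\nabla\varphi}{\nabla(f\varphi)}\,dv
= \int_{M}\INN{\nabla\varphi}{df\otimes\varphi}\,dv + \int_{M} f\|\nabla\varphi\|^{2}\,dv,
\end{equation*}
and the cross term I would evaluate using the pointwise identity $\INN{\nabla_{e_{k}}\varphi}{\varphi} = \tfrac{1}{2}e_{k}\|\varphi\|^{2} = \|\varphi\|\,(e_{k}\|\varphi\|)$ combined with the chain rule $\nabla f = F''(\tfrac{1}{2}\|\varphi\|^{2})\,\|\varphi\|\,\nabla\|\varphi\|$, which delivers
\begin{equation*}
\int_{M}\INN{\nabla\varphi}{df\otimes\varphi}\,dv = \int_{M} F''(\tfrac{1}{2}\|\varphi\|^{2})\,\|\varphi\|^{2}\,\|\nabla\|\varphi\|\|^{2}\,dv.
\end{equation*}

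Substituting the vanishing of the Hodge-Laplacian term and the expanded rough-Laplacian term into the integrated pairing of \eqref{eqn:BWformula} with $f\varphi$, and then rearranging so that the two curvature contributions $\int_{M}f\INN{\mathfrak{R}^{\nabla}(\varphi)}{\varphi}dv$ and $\int_{M}f\INN{\varphi\circ(\mathrm{Ric}\wedge I+2R)}{\varphi}dv$ end up on opposite sides, yields \eqref{eqn:FHarm_BW0}. The only delicate point is the cross-term computation in Step 3 — this is precisely where the $F''$-coefficient on the left-hand side of \eqref{eqn:FHarm_BW0} originates, and it requires weaving together the chain rule for $f$ with the pointwise identity $\|\varphi\|^{2}\|\nabla\|\varphi\|\|^{2} = \tfrac{1}{4}\|\nabla\|\varphi\|^{2}\|^{2}$; everything else is routine bookkeeping on top of the two $F$-harmonicity conditions and standard integration by parts.
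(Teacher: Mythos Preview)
Your argument is correct and uses the same ingredients as the paper's proof: the Bochner--Weitzenb\"ock formula, the reduction $\Delta^{\nabla}\varphi=d^{\nabla}\delta^{\nabla}\varphi$ from $d^{\nabla}\varphi=0$, the vanishing of the Hodge-Laplacian contribution via $\delta^{\nabla}(f\varphi)=0$, and the chain rule to produce the $F''$-coefficient. The only organizational difference is that the paper packages the computation through the scalar Laplacian $\Delta F(\tfrac{1}{2}\|\varphi\|^{2})$ and then invokes Green's theorem, whereas you pair \eqref{eqn:BWformula} with $f\varphi$ directly and extract the $F''$-term from the cross term in $\nabla(f\varphi)=df\otimes\varphi+f\nabla\varphi$ after integrating $\INN{\nabla^{*}\nabla\varphi}{f\varphi}$ by parts; these are equivalent rearrangements of the same identity.
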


\begin{proof}
Let $x\in M$
and $(e_{1},\dotsc,e_{n})$
be an orthonormal basis
of $T_{x}M$.
We extend $(e_{1},\dotsc,e_{n})$
to a local orthonormal frame
field so that
$(De_{1})(x)=0,\dotsc,(De_{n})(x)=0$.
Then, at the point $x$,
we have
\begin{align}
 \Delta F(\frac{1}{2}\|\varphi\|^{2})&=-\sum\nabla_{e_{i}}\nabla_{e_{i}}F(\frac{1}{2}\|\varphi\|^{2})\\
 &=-F''(\dfrac{1}{2}\|\varphi\|^{2})\|\varphi\|^{2}\|\nabla\|\varphi\|\|^{2}
+F'(\dfrac{1}{2}\|\varphi\|^{2})\cdot \dfrac{1}{2}\Delta\|\varphi\|^{2}\,.\label{eqn:FHarm_BW1}
\end{align}
From
Proposition \ref{pro:BWformula}
and $d^{\nabla}\varphi=0$,
we can derive
\begin{equation}
\dfrac{1}{2}\Delta\|\varphi\|^{2}
=\INN{d^{\nabla}\delta^{\nabla}\varphi}{\varphi}
-\INN{\varphi\circ(\mathrm{Ric}\wedge I+2R)}{\varphi}-\INN{\mathfrak{R}^{\nabla}(\varphi)}{\varphi}-\|\nabla\varphi\|^{2}\,.
\end{equation}
Substituting this into
the right hand side of the second term of
\eqref{eqn:FHarm_BW1},
we obtain
\begin{multline}\label{eqn:FHarm_BW3}
\Delta
F(\dfrac{1}{2}\|\varphi\|^{2})\\
=-F''(\dfrac{1}{2}\|\varphi\|^{2})\|\varphi\|^{2}\|\nabla\|\varphi\|\|^{2}
-F'(\dfrac{1}{2}\|\nabla\varphi\|^{2})
\left\{
\INN{\mathfrak{R}^{\nabla}(\varphi)}{\varphi}+\|\nabla\varphi\|^{2}
\right\}\\
-F'(\dfrac{1}{2}\|\varphi\|^{2})\INN{\varphi\circ(\mathrm{Ric}\wedge I+2R)}{\varphi}
+F'(\dfrac{1}{2}\|\nabla\varphi\|^{2})
\INN{d^{\nabla}\delta^{\nabla}\varphi}{\varphi}\,.
\end{multline}

By integrating both sides
over $M$,
the left hand side
vanishes because of Green's theorem
(\cite[Appendix 6]{KN}),
and the right hand side
is equal to
\begin{multline}
-\int_{M}F''(\dfrac{1}{2}\|\varphi\|^{2})\|\varphi\|^{2}\|\nabla\|\varphi\|\|^{2}dv
-\int_{M}F'(\dfrac{1}{2}\|\nabla\varphi\|^{2})
\left\{
\INN{\mathfrak{R}^{\nabla}(\varphi)}{\varphi}+\|\nabla\varphi\|^{2}\right\}dv\\
-\int_{M}F'(\dfrac{1}{2}\|\varphi\|^{2})\INN{\varphi\circ(\mathrm{Ric}\wedge I+2R)}{\varphi}
dv\,.
\end{multline}
Here,
we have used the second equality
in \eqref{dfn:FHarm_dfn}.
From the above arguments
we have the assertion.
\end{proof}

We are ready to prove Theorem \ref{thm:sumI_cal}.

\begin{proof}[Proof of Theorem $\ref{thm:sumI_cal}$.]
Let $\varphi$ be an $F$-harmonic $2$-form.
By using
Lemmas \ref{lem_sumR_R},
\ref{lem:sumdvp_sumdvpp}, (2)
and
\eqref{eqn:sumdvp_sumdvpp1'},
the summation \eqref{eqn:sumIVA_1+}
is rewritten as follows:
\begin{multline}\label{eqn:sumIVA_1+2}
\sum_{A}I_{\varphi}(\iota_{V_{A}}\varphi)
=\int_{M}F''(\dfrac{1}{2}\|\varphi\|^{2})\INN{\bs{h}_{1}(\varphi,\varphi)}{\bs{h}_{1}(\varphi,\varphi)}dv\\
+\int_{M}F''(\dfrac{1}{2}\|\varphi\|^{2})
\|\varphi\|^{2}\|\nabla\|\varphi\|\|^{2}dv
+\int_{M}F'(\dfrac{1}{2}\|\varphi\|^{2})
\left\{
\INN{\mathfrak{R}^{\nabla}(\varphi)}{\varphi}
+\|\nabla\varphi\|^{2}
\right\}dv\\
+\int_{M}F'(\dfrac{1}{2}\|\varphi\|^{2})
\left\{
H(\varphi,\varphi)-\mathrm{Ric}(\varphi,\varphi)+\dfrac{1}{2}R(\varphi,\varphi)
\right\}dv\,.
\end{multline}
By rewriting the right hand side
of \eqref{eqn:FHarm_BW0}
by means of Lemma 
\ref{lem:RicwedgeI+2R_cal},
we obtain
\begin{multline}
\int_{M}F''(\dfrac{1}{2}\|\varphi\|^{2})
\|\varphi\|^{2}\|\nabla\|\varphi\|\|^{2}dv
+\int_{M}F'(\dfrac{1}{2}\|\varphi\|^{2})
\left\{
\INN{\mathfrak{R}^{\nabla}(\varphi)}{\varphi}
+\|\nabla\varphi\|^{2}
\right\}dv\\
=\int_{M}F'(\dfrac{1}{2}\|\varphi\|^{2})\left\{
-\mathrm{Ric}(\varphi,\varphi)
+\dfrac{1}{2}R(\varphi,\varphi)
\right\}dv\,.
\end{multline}
Substituting this
into
\eqref{eqn:sumIVA_1+2},
we can derive
\eqref{eqn:sumI_cal}.
Thus, we have complete the proof.
\end{proof}

\subsection{Analysis of the indices for \texorpdfstring{$F$}{F}-harmonic forms (2)}\label{sec:FHarm_indx_analysis2}

We will perform further calculations
of the summation
$\sum_{A}I_{\varphi}(\iota_{V_{A}}\varphi)$ in terms of 
Theorem \ref{thm:sumI_cal}.
In our calculations, the key is to evaluate the relation between $F'(\|\varphi\|^{2}/2)$
and $F''(\|\varphi\|^{2}/2)$ in \eqref{eqn:sumI_cal}.
So,
we define the degree
of the differential $F'$
as follows.

\begin{dfn}\label{dfn:F'_degree_dfn}
Let $F$ 
be a strictly increasing
$C^{2}$-function defined on
$[0,c)$,
$0<c\leq \infty$.
The \emph{degree}
of $F'$ is defined by
\begin{equation}\label{eqn:degree_dfn}
d_{F'}=\sup_{0<t<c}\dfrac{tF''(t)}{F'(t)}\,.
\end{equation}
\end{dfn}

By the definition
the degree $d_{F'}$
is valued in $\mathbb{R}\cup\{\infty\}$.

\begin{ex}\label{exFYM_F'deg}
We determine the degree
$d_{F'}$
for the functions
$F$ as in Example \ref{ex:FYM_ex}.

(1) For $F=F_{p}$ ($p\geq 2$),
we have 
\begin{equation}
d_{F_{p}'}
=\sup_{t>0}
\dfrac{tF_{p}''(t)}{F_{p}'(t)}
=\sup_{t>0}\dfrac{(p-2)t(2t)^{\frac{p-4}{2}}}{(2t)^{\frac{p-2}{2}}}
=\dfrac{p-2}{2}\,.
\end{equation}

(2) For $F=F_{\epsilon}$
($\epsilon=\pm 1$),
which is defined
on $[0,\infty)$
if $\epsilon=1$;
on $[0,1/2)$ if $\epsilon=-1$,
from
\begin{equation}
\dfrac{tF''_{\epsilon}(t)}{F'_{\epsilon}(t)}=-\dfrac{1}{2}+\dfrac{1}{2(1+2\epsilon t)}\,,
\end{equation}
we get
$d_{F_{\epsilon=1}'}=0$
and
$d_{F_{\epsilon=-1}'}=\infty$.

(3) For $F=F_{e}$, we have $F_{e}'(t)=F_{e}''(t)=e^{t}$.
Hence we have $d_{F_{e}'}=\infty$.
\end{ex}

In what follows,
we assume that the degree $d_{F'}$
is finite.
Let $\varphi\in\Omega^{2}(\mathfrak{g}_{P})$ be a non-zero, $F$-harmonic form.
The norm $\|\varphi\|$ of $\varphi$
gives a smooth function on $M$.
We define a closed subset
$M_{0}$ in $M$ as follows:
\begin{equation}
M_{0}=\left\{
x\in M \mid
\|\varphi\|(x)=0
\right\}\,.
\end{equation}
Then it is verified that $M_{0}$
has measure zero in $M$
by means of
the connectedness of $M$
and $\varphi\not\equiv 0$.
Since $d_{F'}$
is finite,
we have
\begin{equation}
F''(\dfrac{1}{2}\|\varphi\|^{2})\leq \dfrac{2}{\|\varphi\|^{2}}F'(\dfrac{1}{2}\|\varphi\|^{2})\cdot d_{F'}
\qquad
\text{on $M-M_{0}$}\,.
\end{equation}
Hence
\eqref{eqn:sumI_cal}
yields
\begin{align}
\sum_{A}I_{\varphi}(\iota_{V_{A}}\varphi)
&= \int_{M-M_{0}}F''(\dfrac{1}{2}\|\varphi\|^{2})\INN{\bs{h}_{1}(\varphi,\varphi)}{\bs{h}_{1}(\varphi,\varphi)}dv\notag\\
&\phantom{ho}+\int_{M-M_{0}}F'(\dfrac{1}{2}\|\varphi\|^{2})\{H(\varphi,\varphi)-2\mathrm{Ric}(\varphi,\varphi)+R(\varphi,\varphi)\}dv\\
&\leq\int_{M-M_{0}}\dfrac{2}{\|\varphi\|^{2}}F'(\dfrac{1}{2}\|\varphi\|^{2})\cdot d_{F'}\INN{\bs{h}_{1}(\varphi,\varphi)}{\bs{h}_{1}(\varphi,\varphi)}dv\notag\\
&\phantom{ho}+\int_{M-M_{0}}F'(\dfrac{1}{2}\|\varphi\|^{2})\{H(\varphi,\varphi)-2\mathrm{Ric}(\varphi,\varphi)+R(\varphi,\varphi)\}dv\,.\label{eqn:sumIF_ineq1}
\end{align}
So, if we put
\begin{align}
\!\!\!\!\!\!\!\!\!\!\!\!B(\varphi,\varphi)
&=d_{F'}\INN{\bs{h}_{1}(\varphi,\varphi)}{\bs{h}_{1}(\varphi,\varphi)}
+\dfrac{\|\varphi\|^{2}}{2}\{H(\varphi,\varphi)-2\mathrm{Ric}(\varphi,\varphi)+R(\varphi,\varphi)\}\\
&=d_{F'}\INN{\bs{h}_{1}(\varphi,\varphi)}{\bs{h}_{1}(\varphi,\varphi)}
+\dfrac{\|\varphi\|^{2}}{2}\{-H(\varphi,\varphi)+2(h_{2}(\varphi,\varphi)+h_{2}'(\varphi,\varphi))\}\,,\label{eqn:Bvpvp_dfn}
\end{align}
then, by \eqref{eqn:sumIF_ineq1},
we obtain
\begin{equation}\label{eqn:sumIF_ineq2}
\sum_{A}I(\iota_{V_{A}}\varphi)\leq \int_{M-M_{0}}\dfrac{2}{\|\varphi\|^{2}}F'(\dfrac{1}{2}\|\varphi\|^{2})B(\varphi,\varphi)dv\,.
\end{equation}

From the above argument,
we conclude:

\begin{thm}\label{thm:sumIF_ineq2}
Let $M$ be a connected, closed Riemannian manifold isometrically immersed in $\mathbb{R}^{N}$.
Assume that
the degree $d_{F'}$
is finite.
Then, for any non-zero,
$F$-harmonic $2$-form $\varphi$,
we have the inequality
\eqref{eqn:sumIF_ineq2}.
Furthermore,
if 
$B(\varphi,\varphi)<0$ holds,
then we have $\sum_{A}I_{\varphi}(\iota_{V_{A}}\varphi)<0$.
\end{thm}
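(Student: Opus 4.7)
The plan is to run the finite-degree hypothesis against the identity already obtained in Theorem \ref{thm:sumI_cal}, and to convert the pointwise differential inequality defining $d_{F'}$ into an integral inequality by exploiting the fact that $\|\varphi\|$ is smooth and non-negative on $M$. First, I would set $M_{0}=\{x\in M\mid \|\varphi\|(x)=0\}$ and observe that, since $\varphi$ is non-zero and $M$ is connected and closed, $M_0$ is a proper closed subset that one can verify has $dv$-measure zero (for instance, via the unique continuation argument that any non-trivial $F$-harmonic form cannot vanish on an open set; in particular, all integrals over $M$ and over $M-M_0$ agree). On $M-M_0$ the quantity $\|\varphi\|^{2}/2$ lies in $(0,c)$, so the finiteness of $d_{F'}$ gives the pointwise bound
\begin{equation}
F''\bigl(\tfrac{1}{2}\|\varphi\|^{2}\bigr)\;\leq\;\frac{2}{\|\varphi\|^{2}}\,F'\bigl(\tfrac{1}{2}\|\varphi\|^{2}\bigr)\,d_{F'}\quad\text{on }M-M_{0}.
\end{equation}

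Next, I would plug this pointwise inequality into the first term of the right-hand side of \eqref{eqn:sumI_cal}. The crucial sign check is that the coefficient multiplying $F''(\|\varphi\|^{2}/2)$ there is $\INN{\bs{h}_{1}(\varphi,\varphi)}{\bs{h}_{1}(\varphi,\varphi)}$, which is manifestly non-negative, and $F'>0$ because $F$ is strictly increasing, so multiplying through preserves the direction of the inequality. The second term in \eqref{eqn:sumI_cal} involves only $F'$ and no $F''$, so it can be pulled unchanged into the final estimate. Collecting the two terms over the common integrand $\frac{2}{\|\varphi\|^{2}}F'(\|\varphi\|^{2}/2)$ produces precisely the quantity $B(\varphi,\varphi)$ defined in \eqref{eqn:Bvpvp_dfn}, which yields the target inequality \eqref{eqn:sumIF_ineq2}.

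For the second assertion, assuming $B(\varphi,\varphi)<0$ everywhere, the integrand on the right-hand side of \eqref{eqn:sumIF_ineq2} is the product of the strictly positive factor $\frac{2}{\|\varphi\|^{2}}F'(\|\varphi\|^{2}/2)$ (again using $F'>0$ and that $\|\varphi\|>0$ on $M-M_{0}$) and the strictly negative function $B(\varphi,\varphi)$. Integrating over $M-M_{0}$, which has positive measure, immediately gives $\sum_{A}I_{\varphi}(\iota_{V_{A}}\varphi)<0$.

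The only real subtlety in this plan is the null-set argument, namely that $M_{0}$ does not see any $dv$-mass. The main obstacle is therefore to justify replacing the integral over $M$ by the integral over $M-M_{0}$; once that is done, the rest is a routine manipulation combining the identity of Theorem \ref{thm:sumI_cal} with the defining estimate for $d_{F'}$. The rewriting of $H(\varphi,\varphi)-2\mathrm{Ric}(\varphi,\varphi)+R(\varphi,\varphi)$ as $-H(\varphi,\varphi)+2(h_{2}(\varphi,\varphi)+h_{2}'(\varphi,\varphi))$ in the second form of $B(\varphi,\varphi)$ is a direct consequence of Lemma \ref{lem:h2h2'RHRic}.
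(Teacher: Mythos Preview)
Your proposal is correct and follows essentially the same route as the paper: define $M_{0}$, restrict the integrals of Theorem~\ref{thm:sumI_cal} to $M-M_{0}$, apply the pointwise bound $F''\leq \frac{2}{\|\varphi\|^{2}}F'\,d_{F'}$ against the non-negative factor $\INN{\bs{h}_{1}(\varphi,\varphi)}{\bs{h}_{1}(\varphi,\varphi)}$, and regroup into $B(\varphi,\varphi)$. One small remark: the passage from $\int_{M}$ to $\int_{M-M_{0}}$ does not actually require the measure-zero claim you flag as the main subtlety, since every integrand in \eqref{eqn:sumI_cal} is at least quadratic in $\varphi$ and hence vanishes identically on $M_{0}$; all that is really needed for the strict inequality is that $M-M_{0}$ has positive measure, which is immediate from continuity of $\varphi$.
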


Here, we remark that
$B(\varphi,\varphi)$
is independent of the choice
of orthonormal bases
$(e_{1},\dotsc,e_{n})$ of 
$T_{x}M$
and $(e_{n+1},\dotsc,e_{N})$
of $T^{\perp}_{x}M$.
In particular,
the inequality
$B(\varphi,\varphi)<0$
is invariant
under the orthonormal basis changes.

\subsection{Instability of \texorpdfstring{$F$}{F}-Yang-Mills connections over convex hypersurfaces in Euclidean spaces}\label{sec:FYM_inst_CCH}

Let $\varphi$ be an $F$-harmonic $2$-form.
Let $M$ be a connected, compact, convex hypersurface in an ($n+1$)-Euclidean space
$\mathbb{R}^{n+1}$
and $\lambda_{1},\dotsc,\lambda_{n}$ be its principal curvatures.
Without loss of generalities,
we may assume that
$\lambda_{i}$ is positive for each $i$.
It follows from $h^{n+1}_{ij}=\lambda_{i}\delta_{ij}$
that $H(\varphi,\varphi)$
is expressed as follows:
\begin{equation}
H(\varphi,\varphi)=\sum_{i,j}\left(\sum_{m}\lambda_{m}\right)\lambda_{i}\|\varphi_{ij}\|^{2}\,.\label{eqn:Hphph_CCC}
\end{equation}
Furthermore,
we have
%\begin{align}
$\INN{\bs{h}_{1}(\varphi,\varphi)}{\bs{h}_{1}(\varphi,\varphi)}=\sum_{i,j,i',j'}\lambda_{i}\lambda_{i'}\|\varphi_{ij}\|^{2}\|\varphi_{i'j'}\|^{2}$,
$h_{2}(\varphi,\varphi)=\sum_{i,j}\lambda_{i}\lambda_{j}\|\varphi_{ij}\|^{2}$
and
$h_{2}'(\varphi,\varphi)=\sum_{i,j}\lambda_{i}^{2}\|\varphi_{ij}\|^{2}$.
Substituting
these
into \eqref{eqn:Bvpvp_dfn}, we get:
\begin{align}
B(\varphi,\varphi)&=\sum_{i,j,i',j'}
B_{iji'}\|\varphi_{ij}\|^{2}\|\varphi_{i'j'}\|^{2}\,,\\
B_{iji'}&=d_{F'}\lambda_{i}\lambda_{i'}+\dfrac{1}{4}\left\{
-\left(\sum_{m}\lambda_{m}\right)\lambda_{i}+2\lambda_{i}\lambda_{j}+2\lambda_{i}^{2}
\right\}\,.
\end{align}
If $B_{ijk}$ is negative for each $i,j,k$,
then we obtain $B(\varphi,\varphi)<0$.
Then, 
Theorem \ref{thm:sumIF_ineq2}
yields
\begin{equation}\label{eqn:sumAIF_negative}
\sum_{A}I_{\varphi}(\iota_{V_{A}}\varphi)<0\,.
\end{equation}
On the other hand,
the inequality $B_{ijk}<0$ is rewritten as
\begin{equation}\label{eqn:FYM_inst_cdt0}
\lambda_{i}\sum_{m\neq i,j}\lambda_{m}>\lambda_{i}\left(\lambda_{i}+\lambda_{j}+4d_{F'}\lambda_{k}\right)\,,
\end{equation}
that is,
\begin{equation}\label{eqn:FYM_inst_cdt}
\sum_{m\neq i,j}\lambda_{m}>\lambda_{i}+\lambda_{j}+4d_{F'}\lambda_{k}\,.
\end{equation}
From this argument,
\eqref{eqn:FYM_inst_cdt}
gives a sufficient condition that
any non-flat, $F$-Yang-Mills connection
over $M$ is instable.
In order to prove this,
we assume for contradiction
that
there exists a non-flat, weakly stable $F$-Yang-Mills connection $\nabla$ over $M$.
Applying $\varphi=R^{\nabla}\in\Omega^{2}(\mathfrak{g}_{P})$ to \eqref{eqn:sumAIF_negative},
we have
\begin{equation}
\sum_{A}I_{R^{\nabla}}(\iota_{V_{A}}R^{\nabla})<0\,.
\end{equation}
On the other hand,
it follows from the weak instability of $\nabla$
that $I_{R^{\nabla}}(\iota_{V_{A}}R^{\nabla})\geq 0$ holds
for each $A$.
This obeys
\begin{equation}
\sum_{A}I_{R^{\nabla}}(\iota_{V_{A}}R^{\nabla})\geq 0\,,
\end{equation}
which is a contradiction.
Therefore we have derived the following theorem.

\begin{thm}\label{thm:FYM_inst_cch}
Let $\lambda_{1},\dotsc,\lambda_{n}$
be the principal curvatures of
a connected, compact, convex hypersurface $M$
in $\mathbb{R}^{n+1}$.
Assume that the degree $d_{F'}$
is finite.
Then,
any non-flat, $F$-Yang-Mills connection over $M$ is instable
if the following condition holds:
\begin{equation}\label{eqn:FYM_inst_cdt_cptconvex}
\sum_{m\neq i,j}\lambda_{m}>\lambda_{i}+\lambda_{j}+4d_{F'}\lambda_{k}
\quad
(1\leq i,j,k\leq n)\,.
\end{equation}
\end{thm}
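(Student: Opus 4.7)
The plan is to argue by contradiction: assume there exists a non-flat, weakly stable $F$-Yang-Mills connection $\nabla$ over $M$ satisfying hypothesis \eqref{eqn:FYM_inst_cdt_cptconvex}, and produce a family of test $1$-forms whose total index is strictly negative, contradicting weak stability.

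First I would note that, because $\nabla$ is a non-flat $F$-Yang-Mills connection, the Bianchi identity together with the $F$-Yang-Mills equation \eqref{eqn:FYM_ELeq} shows that $\varphi := R^{\nabla}$ is a non-zero $F$-harmonic $2$-form. Theorem \ref{thm:sumIF_ineq2} then applies and reduces the problem to establishing $B(\varphi,\varphi) < 0$ on the open dense set $M - M_0$ where $\|\varphi\|$ does not vanish: once this pointwise negativity is in hand, the positivity of $F'$ on its domain and the measure-zero character of $M_0$ upgrade the integral bound to $\sum_A I_\varphi(\iota_{V_A}\varphi) < 0$, which contradicts $I_\varphi(\iota_{V_A}\varphi) \geq 0$ coming from weak stability.

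The main computational step is therefore to rewrite $B(\varphi,\varphi)$ under the convex hypersurface hypothesis. I would diagonalize the second fundamental form by choosing a local orthonormal frame $(e_1,\dotsc,e_n)$ of principal directions with $h^{n+1}_{ij} = \lambda_i \delta_{ij}$ and $\lambda_i > 0$; substituting into the definitions of $H(\varphi,\varphi)$, $\INN{\bs{h}_1(\varphi,\varphi)}{\bs{h}_1(\varphi,\varphi)}$, $h_2(\varphi,\varphi)$, and $h_2'(\varphi,\varphi)$ recasts \eqref{eqn:Bvpvp_dfn} as a manifestly positive-coefficient combination
\begin{equation}
B(\varphi,\varphi) = \sum_{i,j,i',j'} B_{iji'}\, \|\varphi_{ij}\|^2 \|\varphi_{i'j'}\|^2,
\end{equation}
where
\begin{equation}
B_{iji'} = d_{F'}\lambda_i \lambda_{i'} + \frac{1}{4}\Bigl\{-\bigl(\textstyle\sum_m \lambda_m\bigr)\lambda_i + 2\lambda_i \lambda_j + 2\lambda_i^2\Bigr\}.
\end{equation}
It therefore suffices to show $B_{ijk} < 0$ for all index triples, since that forces $B(\varphi,\varphi) < 0$ whenever $\varphi \neq 0$.

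Finally, after factoring out $\lambda_i > 0$, the inequality $B_{ijk} < 0$ is equivalent to $\sum_{m \neq i,j}\lambda_m > \lambda_i + \lambda_j + 4 d_{F'}\lambda_k$, which is exactly the hypothesis \eqref{eqn:FYM_inst_cdt_cptconvex}. I do not anticipate any serious obstacle: the substantial analytic content is already packaged into Theorem \ref{thm:sumIF_ineq2} (which in turn rests on the Bochner-Weitzenb\"ock identity from Proposition \ref{pro:FHarm_BW} and on Theorem \ref{thm:sumI_cal}), and the remaining work is the bookkeeping of substituting $h^{n+1}_{ij} = \lambda_i \delta_{ij}$ and rearranging the inequality. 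The one mild subtlety worth verifying is that positivity of $\lambda_i$ (the convexity assumption) is genuinely used when we divide through to pass from $B_{ijk}<0$ to the stated curvature inequality.
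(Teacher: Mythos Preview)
Your proposal is correct and follows essentially the same route as the paper's own argument: diagonalize the second fundamental form in principal directions, substitute $h^{n+1}_{ij}=\lambda_i\delta_{ij}$ into \eqref{eqn:Bvpvp_dfn} to obtain the quadrilinear expression $B(\varphi,\varphi)=\sum_{i,j,i',j'}B_{iji'}\|\varphi_{ij}\|^{2}\|\varphi_{i'j'}\|^{2}$ with the same coefficients $B_{iji'}$, reduce $B_{ijk}<0$ to \eqref{eqn:FYM_inst_cdt_cptconvex} by dividing through by $\lambda_i>0$, and then invoke Theorem~\ref{thm:sumIF_ineq2} together with the contradiction argument against weak stability. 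The only cosmetic difference is that the paper carries out the computation for a general $F$-harmonic $2$-form first and specializes to $\varphi=R^{\nabla}$ at the end, whereas you set $\varphi=R^{\nabla}$ at the outset; this has no mathematical significance.
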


Let us consider the case when $M$
is the standard $n$-sphere $S^{n}\subset \mathbb{R}^{n+1}$.
If we denote by $r$
the radius of $S^{n}$,
then the principal curvatures
$\lambda_{i}$ are equal to $1/r$.
Hence
we have the following result as a corollary of Theorem \ref{thm:FYM_inst_cch}.
\begin{cor}\label{cor:FYM_Sninstable}
If the inequality
\begin{equation}\label{eqn:FYM_inst_cdt_sphere}
n>4d_{F'}+4
\end{equation}
holds, then 
any non-flat, $F$-Yang-Mills connection
over $S^{n}$ is instable.
\end{cor}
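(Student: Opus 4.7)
The plan is to specialize Theorem~\ref{thm:FYM_inst_cch} to the round sphere $M = S^{n} \subset \mathbb{R}^{n+1}$. Since $S^{n}$ is manifestly a connected, compact, convex hypersurface in $\mathbb{R}^{n+1}$, the geometric hypotheses of that theorem are automatic, and under the standing assumption that $d_{F'}$ is finite the task reduces purely to checking the curvature inequality \eqref{eqn:FYM_inst_cdt_cptconvex}.

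The first step is to record that all principal curvatures of the sphere of radius $r$ are equal, namely $\lambda_{i} = 1/r$ for every $i = 1, \dots, n$. Consequently, the right-hand side of \eqref{eqn:FYM_inst_cdt_cptconvex} becomes $\lambda_{i} + \lambda_{j} + 4 d_{F'} \lambda_{k} = (2 + 4 d_{F'})/r$, independently of the choice of indices $i,j,k$. For the left-hand side $\sum_{m \neq i,j} \lambda_{m}$, I would distinguish two cases: when $i = j$ the sum contains $n - 1$ terms, and when $i \neq j$ it contains $n - 2$ terms, producing $(n-1)/r$ and $(n-2)/r$ respectively. After cancelling the common factor $1/r$, the case $i \neq j$ demands $n - 2 > 2 + 4 d_{F'}$, while the case $i = j$ demands only the weaker $n - 1 > 2 + 4 d_{F'}$. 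Both hold simultaneously exactly when $n > 4 d_{F'} + 4$, so under this hypothesis Theorem~\ref{thm:FYM_inst_cch} applies and yields the instability of every non-flat, $F$-Yang-Mills connection over $S^{n}$.

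There is no real obstacle here; the corollary is a direct unpacking of Theorem~\ref{thm:FYM_inst_cch} using the constancy of the principal curvatures of the round sphere. The only point requiring a moment's attention is to identify the strictest of the index-dependent cases of \eqref{eqn:FYM_inst_cdt_cptconvex}, namely $i \neq j$, which is what produces the sharp dimensional threshold $n > 4 d_{F'} + 4$ appearing in \eqref{eqn:FYM_inst_cdt_sphere}.
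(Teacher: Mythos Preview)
Your argument is correct and follows exactly the paper's approach: specialize Theorem~\ref{thm:FYM_inst_cch} to $S^{n}\subset\mathbb{R}^{n+1}$ by setting every $\lambda_{i}=1/r$ and read off the condition \eqref{eqn:FYM_inst_cdt_sphere}. Your case distinction $i=j$ versus $i\neq j$ is slightly more careful than the paper, which passes over this point, but the strictest case $i\neq j$ indeed yields $n>4d_{F'}+4$ as required.
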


We give an application
of
Corollary
\ref{cor:FYM_Sninstable}
for $F$-Yang-Mills connections
as in Example \ref{ex:FYM_ex}.
As shown in Example \ref{exFYM_F'deg},
for $F=F_{p}$
($p\geq 2$),
$F_{\epsilon=1}$,
the degree $d_{F'}$ is finite.

(1) In the case of $F=F_{p}$,
\eqref{eqn:FYM_inst_cdt_sphere}
reduces to $n>2p$.
Hence,
if $n>2p$, then
any non-flat, $p$-Yang-Mills connection
over $S^{n}$
is instable.
This result coincides with
the results of Simons (\cite{Simons}) for $p=2$
and Chen-Zhou (\cite[Corollary 4.2]{CZ}) for $p\geq 2$.

(2) In the case of $F=F_{\epsilon=1}$,
we have obtained $d_{F_{\epsilon=1}'}=0$.
Thus,
if $n>4$,
then any non-flat, critical connection
of the generalized Yang-Mills-Born-Infeld
energy functional with positive sign
is instable.

By means of Theorem \ref{thm:sumI_cal},
we give an observation
for the instability
of an  $F$-Yang-Mills connection
in the case when
$F'$ has infinite degree.
Here, we recall that
Theorem \ref{thm:sumI_cal}
does not require
no assumptions about the finiteness of $d_{F'}$.
Now, 
let us consider
the instability
of critical connections
of $\mathscr{Y\!\!M}_{\epsilon=-1}$
and exponential Yang-Mills connections,
which are examples of 
$F$-Yang-Mills connections with $d_{F'}=\infty$.
We first consider the case when $F=F_{\epsilon=-1}$.
Based on the domain of
definition for $F_{\epsilon=-1}$,
we assume
that the $F_{\epsilon=-1}$-harmonic form
$\varphi=R^{\nabla}\in\Omega^{2}(\mathfrak{g}_{P})$
satisfies
$\|\varphi\|<1$.
From $h_{ij}^{n+1}=(1/r)\delta_{ij}$
we get
\begin{equation}
H(\varphi,\varphi)
=\dfrac{2n}{r^{2}}\|\varphi\|^{2},\quad
\INN{\bs{h}_{1}(\varphi,\varphi)}{\bs{h}_{1}(\varphi,\varphi)}=\dfrac{4}{r^{2}}\|\varphi\|^{4},
\quad
h_{2}(\varphi,\varphi)
=h_{2}'(\varphi,\varphi)
=\dfrac{2}{r^{2}}\|\varphi\|^{2}\,.
\end{equation}
By Theorem \ref{thm:sumI_cal}
and \eqref{lem:h2h2'RHRic'},
we have
\begin{align}
&\sum_{A}(I_{\varphi}\iota_{V_{A}}\varphi)\\
&\phantom{hoge}=\int_{S^{n}}\dfrac{4}{r^{2}}F_{\epsilon=-1}''(\dfrac{1}{2}\|\varphi\|^{2})\|\varphi\|^{4}dv
+\int_{S^{n}}F_{\epsilon=-1}'(\dfrac{1}{2}\|\varphi\|^{2})
\left\{-\dfrac{2(n-4)}{r^{2}}\right\}\|\varphi\|^{2}dv\,.
\end{align}
By using 
\begin{equation}
F_{\epsilon=-1}'(\dfrac{1}{2}\|\varphi\|^{2})=\dfrac{1}{\sqrt{1-\|\varphi\|^{2}}}
\,,\quad
F_{\epsilon=-1}''(\dfrac{1}{2}\|\varphi\|^{2})=\dfrac{1}{(1-\|\varphi\|^{2})\sqrt{1-\|\varphi\|^{2}}}\,,
\end{equation}
we obtain
\begin{equation}\label{eqn:gYMBI_indxsum}
\sum_{A}I_{\varphi}(\iota_{V_{A}}\varphi)
=\dfrac{2}{r^{2}}\int_{S^{n}}
\dfrac{\|\varphi\|^{2}}{\sqrt{1-\|\varphi\|^{2}}}
\left\{
\dfrac{2}{1-\|\varphi\|^{2}}
-(n-2)
\right\}dv\,.
\end{equation}
From this argument,
if the integrand of the right hand side
of \eqref{eqn:gYMBI_indxsum}
is negative on $S^{n}$,
then $\sum_{A}I_{\varphi}(\iota_{V_{A}}\varphi)<0$ holds.
Thus, we derive the following proposition.

\begin{pro}\label{pro:gYMBI-_inst}
Let $\nabla$
be a non-flat,
critical connection 
over the standard $n$-sphere $S^{n}$
for the generalized Yang-Mills-Born-Infeld
energy functional
$\mathscr{Y\!\!M}_{\epsilon=-1}$
with negative sign.
If $n>4$ and
the curvature $2$-form $R^{\nabla}$
satisfies
\begin{equation}
\|R^{\nabla}\|<\sqrt{\dfrac{n-4}{n-2}}\,,
\end{equation}
then $\nabla$ is instable.
\end{pro}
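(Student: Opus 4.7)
The plan is to apply the index formula \eqref{eqn:gYMBI_indxsum} directly to the curvature $\varphi = R^{\nabla}$ and show that the hypothesis forces the integrand to be negative on a set of positive measure. This yields at least one destabilizing direction among the test variations $\iota_{V_{A}} R^{\nabla}$.

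First I would observe that, since $\nabla$ is $F_{\epsilon=-1}$-Yang-Mills, the curvature $R^{\nabla}$ is an $F_{\epsilon=-1}$-harmonic $2$-form by the Bianchi identity together with the $F$-Yang-Mills equation \eqref{eqn:FYM_ELeq}, as noted after Definition \ref{dfn:FHarm_dfn}. Consequently the computation leading to \eqref{eqn:gYMBI_indxsum} applies with $\varphi = R^{\nabla}$, so
\begin{equation}
\sum_{A} I_{R^{\nabla}}(\iota_{V_{A}} R^{\nabla}) = \dfrac{2}{r^{2}} \int_{S^{n}} \dfrac{\|R^{\nabla}\|^{2}}{\sqrt{1-\|R^{\nabla}\|^{2}}} \left\{ \dfrac{2}{1-\|R^{\nabla}\|^{2}} - (n-2) \right\} dv.
\end{equation}
Note that $\|R^{\nabla}\| < 1$ is automatic from the hypothesis, so the square root in the denominator is well-defined (which is also required by the domain of $F_{\epsilon=-1}$).

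Next I would perform the sign analysis. The prefactor $\|R^{\nabla}\|^{2}/\sqrt{1-\|R^{\nabla}\|^{2}}$ is non-negative, so the sign of the integrand is dictated by the bracket. A direct rearrangement shows
\begin{equation}
\dfrac{2}{1-\|R^{\nabla}\|^{2}} - (n-2) < 0 \quad \Longleftrightarrow \quad \|R^{\nabla}\|^{2} < \dfrac{n-4}{n-2},
\end{equation}
which is exactly the assumed bound (and uses $n>4$ to make the right-hand side positive). Thus under the hypothesis the bracket is pointwise negative on $S^{n}$. Since $\nabla$ is non-flat and $S^{n}$ is connected, $\|R^{\nabla}\|^{2}$ does not vanish identically, so the integrand is strictly negative on a set of positive measure while being non-positive everywhere. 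Therefore $\sum_{A} I_{R^{\nabla}}(\iota_{V_{A}} R^{\nabla}) < 0$, which forces $I_{R^{\nabla}}(\iota_{V_{A}} R^{\nabla}) < 0$ for at least one $A$. By Proposition \ref{pro:FYM_2ndVar}, the curve of connections with initial velocity $\iota_{V_{A}} R^{\nabla}$ produces a negative second variation of $\mathscr{Y\!\!M}_{F_{\epsilon=-1}}$, so $\nabla$ is instable.

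Because formula \eqref{eqn:gYMBI_indxsum} is already in hand from Theorem \ref{thm:sumI_cal}, no genuine obstacle remains: the argument reduces to the elementary sign inequality above. The only delicate point worth writing down explicitly is the justification that $\|R^{\nabla}\|^{2} \not\equiv 0$ (so that the integral is truly negative rather than merely non-positive), which uses only the non-flatness assumption on $\nabla$ together with the connectedness of $S^{n}$.
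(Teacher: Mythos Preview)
Your proof is correct and follows the same route as the paper: apply the index identity \eqref{eqn:gYMBI_indxsum} with $\varphi=R^{\nabla}$, observe that the bracket is negative precisely when $\|R^{\nabla}\|^{2}<(n-4)/(n-2)$, and use non-flatness to conclude strict negativity of the sum. Your explicit justification that the integrand is strictly negative on a set of positive measure (rather than merely non-positive) is a welcome clarification; note, though, that continuity of $\|R^{\nabla}\|$ together with $R^{\nabla}\not\equiv 0$ already gives this, so connectedness of $S^{n}$ is not actually needed at that step.
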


We give
an analogous result for exponential Yang-Mills connections.
Let $\nabla$ be
an exponential Yang-Mills connection
over $S^{n}$ and $\varphi$
be in $\Omega^{2}(\mathfrak{g}_{P})$.
A similar calculation shows
\begin{align}
\sum_{A}(I_{\varphi}\iota_{V_{A}}\varphi)
&=\dfrac{2}{r^{2}}\int_{S^{n}}
\exp(\dfrac{1}{2}\|\varphi\|^{2})
\|\varphi\|^{2}
\left\{
2\|\varphi\|^{2}-(n-4)
\right\}dv\,.
\end{align}
From this we conclude:

\begin{pro}\label{pro:eYM-_inst}
Let $\nabla$
be a non-flat,
exponential Yang-Mills connection 
over the standard $n$-sphere $S^{n}$.
If $n>4$ and
the curvature $2$-form $R^{\nabla}$
satisfies
\begin{equation}
\|R^{\nabla}\|
<\sqrt{\dfrac{n-4}{2}}\,,
\end{equation}
then $\nabla$ is instable.
\end{pro}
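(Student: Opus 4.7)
The plan is to mirror the argument used for Proposition \ref{pro:gYMBI-_inst}, now with $F = F_e$. Since $\nabla$ is an exponential Yang-Mills connection, its curvature $\varphi = R^\nabla$ is an $F_e$-harmonic $2$-form, so Theorem \ref{thm:sumI_cal} applies. I would first specialize the four geometric quantities that appear on the right-hand side of \eqref{eqn:sumI_cal} to $S^n \subset \mathbb{R}^{n+1}$, using $h^{n+1}_{ij} = (1/r)\delta_{ij}$ exactly as was done for the Born-Infeld case. This yields
\begin{equation}
H(\varphi,\varphi) = \frac{2n}{r^2}\|\varphi\|^2,\quad
\INN{\bs{h}_1(\varphi,\varphi)}{\bs{h}_1(\varphi,\varphi)} = \frac{4}{r^2}\|\varphi\|^4,\quad
h_2(\varphi,\varphi) = h_2'(\varphi,\varphi) = \frac{2}{r^2}\|\varphi\|^2,
\end{equation}
and, together with \eqref{lem:h2h2'RHRic'}, gives $H(\varphi,\varphi) - 2\mathrm{Ric}(\varphi,\varphi) + R(\varphi,\varphi) = -(2(n-4)/r^2)\|\varphi\|^2$.

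Next I would substitute $F_e'(t) = F_e''(t) = e^t$ into \eqref{eqn:sumI_cal}. Combining the two terms produces the identity stated just before the proposition,
\begin{equation}
\sum_A I_\varphi(\iota_{V_A}\varphi)
= \frac{2}{r^2}\int_{S^n}
\exp\!\bigl(\tfrac{1}{2}\|\varphi\|^2\bigr)\|\varphi\|^2
\bigl\{2\|\varphi\|^2 - (n-4)\bigr\}\,dv.
\end{equation}
Under the hypotheses $n > 4$ and $\|R^\nabla\| < \sqrt{(n-4)/2}$, the bracketed quantity $2\|\varphi\|^2 - (n-4)$ is strictly negative pointwise on $S^n$, so the integrand is $\leq 0$ everywhere and strictly negative wherever $\|\varphi\|> 0$. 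Because $\nabla$ is non-flat, $R^\nabla \not\equiv 0$; by smoothness $\|\varphi\|^2 > 0$ on a nonempty open set, forcing $\sum_A I_{R^\nabla}(\iota_{V_A}R^\nabla) < 0$.

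To conclude, I would argue by contradiction exactly as in the proof of Theorem \ref{thm:FYM_inst_cch}: if $\nabla$ were weakly stable, then by Proposition \ref{pro:FYM_2ndVar} and Definition \ref{dfn:FHarm_indx} each individual index $I_{R^\nabla}(\iota_{V_A}R^\nabla)$ would be non-negative, contradicting the strict negativity of their sum. Thus $\nabla$ must be instable. The only conceptually delicate point, and the main thing to verify carefully, is the pointwise sign analysis of the integrand combined with the non-flatness assumption; everything else is a direct substitution into Theorem \ref{thm:sumI_cal} in complete parallel with the derivation preceding Proposition \ref{pro:gYMBI-_inst}, with the simpler function $F_e$ in place of $F_{\epsilon=-1}$.
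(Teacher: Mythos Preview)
Your proposal is correct and follows exactly the paper's approach: the paper derives the same integral identity by the same ``similar calculation'' (specializing Theorem \ref{thm:sumI_cal} to $S^n$ with $F_e'=F_e''=e^t$) and then reads off the instability from the sign of the integrand. Your write-up simply makes explicit the pointwise sign analysis and the contradiction with weak stability that the paper leaves to the reader.
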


There are strong similarities
between the theory of Yang-Mills connections and that of harmonic maps,
which are critical points of a certain
energy functional defied on the space
of smooth map between Riemannian manifolds.
Finally,
we discuss a counter part of our results in the theory
of harmonic maps as follows:
Ara \cite{Ara}
introduced the notion of $F$-harmonic maps as a generalization of harmonic maps, $p$-harmonic maps and so on.
He (\cite[Theorem 7.1]{Ara}) also derived the instability theorem
of $F$-harmonic maps
from a closed Riemannian manifold
into the $n$-dimensional standard sphere $S^{n}$,
which is an extension of the results
by Leung \cite{Leung} for harmonic maps
and by Cheung-Leung \cite{CL}
for $p$-harmonic maps.
By means of Ara's result,
the finiteness
of the degree $d_{F'}$
in the sense of Definition \ref{dfn:F'_degree_dfn}
yields the following
statement as a counter part of Corollary \ref{cor:FYM_Sninstable}:
If the inequality
\begin{equation}
n>2d_{F'}+2
\end{equation}
holds, then any non-constant
$F$-harmonic map from a connected,
closed Riemannian manifold
into $S^{n}$ is instable.
This inequality is a natural extension of Leung's one \cite[Corollary 1]{Leung}.
We can also find a counter part
of Proposition \ref{pro:eYM-_inst}
in the theory of exponentially harmonic maps due to Koh \cite[Theorem, p.~212]{Koh}.


\begin{thebibliography}{00}
\bibitem{Ara} M.~Ara, ``Geometry of $F$-Harmonic Maps'', Kodai.~Math.~J., \textbf{22} (1999), 243--263.
\bibitem{ADHM} M.F.~Atiyah, 
V.G.~Drinfeld,
N.J.~Hitchin,
Yu.I.~Manin, ``Construction of instantons'',
Phys.~Lett.~A, \textbf{65}, (1978), 185--187.
\bibitem{BL} J.~P.~Bourguignon,
H.~B.~Lawson, Jr.~
``Stability and Isolation Phenomena for Yang-Mills Fields'',
Commun.~Math.~Phys.,
\textbf{79}, (1981), 189--230.
\bibitem{CZ} Q.~Chen, Z.-R.~Zhou,
``On Gap Properties and Instabilities of $p$-Yang-Mills Fields'',
Cand. J. Math., \textbf{59}, (2007), 1245--1259.
\bibitem{CL} L.~F.~Cheung, P.~F.~Leung, ``Some results on stable $p$-harmonic maps'', Glasgow Math.~J., \textbf{36} (1994), 77--80.
\bibitem{DW} Y.~Dong, S.~W.~Wei, ``On Vanishing Theorems for Vector Bundle Valued $p$-Forms and their Applications'',
Commun.~Math.~Phys., \textbf{304}, (2011),
329--368.
\bibitem{Gherghe} C.~Gherghe, ``On a  Yang-Mills Type Functional'', SIGMA, \textbf{15} (2019), 022, pp.~8.
\bibitem{Hamilton} M.~Hamilton, ``Mathematical Gauge Theory'',
Springer Cham, 2017.
\bibitem{JZ12} G.-Y.~Jia, Z.-R.~Zhou,
``Gaps of $F$-Yang-Mills fields on submanifolds'', Tsukuba J.~Math, \textbf{36},
(2012), 121--134.
\bibitem{JZ} G.-Y.~Jia, Z.-R.~Zhou,
``Stabilities of $F$-Yang-Mills Fields on Submanifolds'',
Archivum Mathematicum, \textbf{49}, (2013), 125--139.
\bibitem{KOT} S.~Kobayashi, Y.~Ohnita and M.~Takeuchi, ``On instability of Yang-Mills connections'', Math.~Z., \textbf{193},
(1986), 165--189.
\bibitem{KN} S.~Kobayashi, K.~Nomizu,
``Foundations of Differential Geometry Volume I'',
John Wiley \& Sons, Inc.~1963.
\bibitem{KN2} S.~Kobayashi, K.~Nomizu,
``Foundations of Differential Geometry Volume II'',
John Wiley \& Sons, Inc.~1969.
\bibitem{Koh} S.~E.~Koh, ``A nonexistence theorem for stable exponentially harmonic maps'',
Bull.~Korean Math.~Soc.,~\textbf{32}
(1995), 211--214.
\bibitem{Leung} P.~F.~Leung, ``On the stability of harmonic maps, Harmonic Maps'', Lecture Notes in Mathematics, \textbf{949}, Springer Verlag, 1982, 122--129.
\bibitem{MU} F.~Matsuura, H.~Urakawa,
``On exponential Yang-Mills connections'',
Journal of Geometry and Physics, \textbf{17}
(1995), 73--89.
\bibitem{SSY} L.~Sibner, R.~Sibner and Y.~S.~Yang, ``Generalized Bernstein property and gravitational strings in Born-Infeld theory'',
Nonlinearity, \textbf{20}, (2007), 1193--1213.
\bibitem{Simons} J.~Simons, ``Gauge Fields'',
Tokyo Symposium on Minimal Submanifolds and Geodesics, 1977.
\bibitem{Uhlenbeck} K.~Uhlenbeck,
``Connections with $L^{p}$ bounds on curvature'',
Comm.~Math.~Phys., \textbf{83}, (1982), 31--42.
\bibitem{Wei} S.~W.~Wei, ``On exponential Yang-Mills fields and $p$-Yang-Mills fields'', arXiv:2205.03016v1 [math.DG] 6 May 2022.
\end{thebibliography}
\end{document}